\documentclass[11pt]{article}

\usepackage{amssymb}
\usepackage{amsmath}
\usepackage{amsthm}
\usepackage[english]{babel}
\usepackage[utf8]{inputenc}
\usepackage{latexsym}
\usepackage{mathrsfs}
\usepackage{bbm}
\usepackage{graphicx}
\usepackage[hidelinks]{hyperref}
\usepackage{accents}
\usepackage{cases}
\usepackage{eurosym}
\usepackage{url}
\usepackage{color}
 

\newcommand{\R}{\mathbb{R}}

\newcommand{\E}{\mathcal{E}}
\newcommand{\F}{\mathcal{F}}

\DeclareMathOperator{\esssup}{esssup}
\def\esssup_#1{\underset{#1}{\mathrm{ess\,sup\, }}}
\def\essinf_#1{\underset{#1}{\mathrm{ess\,inf\, }}}
\def\argmax_#1{\underset{#1}{\mathrm{arg\,max\, }}}
\def\argmin_#1{\underset{#1}{\mathrm{arg\,min\, }}}

\def \ep{\hbox{ }\hfill$\Box$}

\def\b1{\bf 1}

\def \R{\mathbb{R}}

\def \E{\mathbb{E}}
\def \F{\mathbb{F}}

\def \P{\mathbb{P}}

\def \Ac{{\cal A}}

\def \Dc{{\cal D}}

\def \Fc{{\cal F}}

\def \Vc{{\cal V}}

\def \Vc{{\cal V}}

\def \th{\theta}

\def \ep{\hbox{ }\hfill$\Box$}

\newtheorem{Theorem}{Theorem}[part]
\newtheorem{Definition}{Definition}[part]
\newtheorem{Proposition}{Proposition}[part]

\newtheorem{Lemma}{Lemma}[part]

\newtheorem{Remark}{Remark}[part]

\def\reff#1{{\rm(\ref{#1})}}
\def\beqs{\begin{eqnarray*}}
\def\enqs{\end{eqnarray*}}
\def\beq{\begin{eqnarray}}
\def\enq{\end{eqnarray}}

\addtolength{\oddsidemargin}{-0.1 \textwidth}
\addtolength{\textwidth}{0.2 \textwidth}
\addtolength{\topmargin}{-0.1 \textheight}
\addtolength{\textheight}{0.2 \textheight}


\newcommand{\rr}{\mathbb{R}}

\newcommand{\eee}{\mathbb{E}}

\newcommand{\aaa}{\mathcal{A}}

\newcommand{\h}{\alpha}

\newcommand{\pz}{\pi_0}
\newcommand{\po}{\pi_1}

\begin{document}

\title{A McKean-Vlasov approach to distributed electricity generation development
\footnote{This work is supported by FiME (Finance for Energy Market Research Centre) and the ``Finance et D\'eveloppement Durable - Approches Quantitatives'' EDF - CACIB Chair.}
}

\author{
Ren\'e A\"id
\footnote{LEDa, Université Paris Dauphine, PSL research university, \sf rene.aid at dauphine.fr}
\qquad
Matteo Basei
\footnote{IEOR, University of California, Berkeley, and EDF R\&D \sf matteo.basei at edf.fr}
\qquad
Huy\^en Pham
\footnote{LPSM, Université Paris Diderot and CREST-ENSAE, \sf pham at lpsm.paris}
}

\maketitle

\begin{abstract} 
This paper analyses the interaction between centralised carbon emissive technologies and distributed intermittent non-emissive technologies. In our model, there is a representative consumer who  can satisfy her electricity demand by investing in distributed generation (solar panels) and by buying power from a centralised firm at a price the firm sets. Distributed generation is intermittent and induces an externality cost to the consumer. The firm provides non-random electricity generation subject to a carbon tax and to transmission costs. The objective of the consumer is to satisfy her demand while mini\-mising investment costs, payments to the firm and intermittency costs. The objective of the firm is to satisfy the consumer's residual demand while minimising investment costs, demand deviation costs, and maximising the payments from the consumer. We formulate the investment decisions as McKean-Vlasov control problems with stochastic coefficients. We provide explicit, price model-free solutions to the optimal decision problems faced by each player, the solution of the Pareto optimum, and the Stackelberg  equilibrium where the firm is the leader.  We find that, from the social planner's point of view, the carbon tax or transmission costs are necessary to justify a positive share of distributed capacity in the long-term, whatever the respective investment costs of both technologies are. The Stackelberg equilibrium is far from the Pareto equilibrium and leads to an over-investment in distributed energy and to a much higher price for centralised energy.  
\end{abstract}

\vspace{3mm}
\noindent {\bf JEL Classification}:  L94, C73, C61, O33, Q41, Q42.

\vspace{3mm}
\noindent {\bf MSC Classification}: 91B42,  93E20, 91A15.

\vspace{3mm}
\noindent {\bf Key words}: decarbonation, distributed generation, stochastic game,  McKean-Vlasov. 

\section{Introduction}

This paper analyses the interaction between centralised carbon emissive technologies and distributed intermittent non-emissive technologies. We start with the premise that distributed energy sources (mainly solar energy) give consumers an instrument that increases their bargaining power in their interaction with electric firms. This new deal in power systems entails a strategic interaction between these firms industry-wide and consumers now empowered with generation capacities. But this interaction is not of purely competitive type. Distributed generation is intermittent, highly dependent on weather conditions, and might not be enough at each instant to satisfy consumers' electricity demand. To avoid extra costs to compensate for the intermittency of distributed generation (e.g.~batteries), consumers might still want to have access to centralised generation as insurance. 

The economic literature on the development of renewable energy has focused on the real values of renewable energy technologies (wind and solar) and in particular, on the externality cost they induce in electric systems, and on what should be the optimal share of renewable energy in a given power system. The main references on this thoroughly studied subject are Joskow (2011) \cite{Joskow11}, Borenstein (2012) \cite{Borenstein12}, Hirth (2013) \cite{Hirth13} and more recently by Gowrisankaran et al.~(2016) \cite{Gowrisankaran16}. Another topic is the existence or not of a capacity threshold  where renewable energy is viable and no longer needs subsidies. Green and Léautier (2015) \cite{Green15} address this question and find that achieving a low-carbon power system without permanent subsidy might not be possible because renewable energy drives electricity prices down as its market share increases. Another important issue is how the distribution tariffs should be adapted to take into account the new phenomenon of power injection at the distribution level (see \cite{Brown17} and references therein for a description of the metering problem). To our knowledge, our paper is the first to study the development of renewable energy as a strategic interaction situation between consumers and electricity firms.

We consider a representative consumer and a firm representing the electricity producers as a whole. Since our focus is on the effect of the distributed generation intermittency, we assume that the consumer's demand is inelastic and constant. The consumer can invest in solar energy while the firm invests in power plants. Both perform their investments at a continuously controlled investment rate. We assume that the firm invests in dispatchable generation facilities (i.e.~non-random) and sells its production to the consumer at a price he fixed. The dispatchable technology is carbon emissive and its generation is subject to a carbon tax. The carbon tax decreases as the distributed non-emissive technology share increases. The objective of the firm is to satisfy the electricity demand of the consumer at least total discounted expected costs. The firm's cost function has three parts: the investment cost, the penalty for deviating from demand, and the revenue from selling electricity to the consumer.  The firm decides the price at which to sell its energy to the consumer. This price is a random process. The consumer's objective is also to minimise the total discounted expected costs to satisfy her demand, either by using her own random solar energy or by buying energy from the firm. The consumer has disutility from the intermittency of  distributed energy. This disutility  is quantified as a penalisation of the variance of the distributed energy. Further, the centralised energy produced by the firm is characterised by the existence of transmission costs that are assumed to be proportional to the energy price and that are paid by the consumer.  Although the burden of transmission costs can be allocated by the transmission operator to the firm in some countries (e.g.~in the United Kingdom), it is transferred to the consumer by the firm.

Although this model simplifies the complexities of the investment decisions in electricity generation, it captures a key new feature in today's power systems and a new economic problem facing electric firms. If the firm decides to charge a high price for its energy to maximise its profit, then the consumer has the possibility to escape the firm's market power by investing in distributed energy and generating her own electricity.  However, charging too low a price to avoid this situation could impede the firm from recovering its investment costs. This trade-off is at the core of the interaction problem between the consumer and the firm. If the consumer has no other alternative than buying electricity from the firm, the equilibrium will be fixed by the standard theory of marginal cost pricing that depends on the demand elasticity of the consumer (see Boiteux (1956,1960) \cite{Boiteux56,Boiteux60}). Here, the availability of the option to invest in distributed generation changes the respective bargaining powers of the consumer and the firm. The consumer can escape the firm's market power at the expense of investing in a costly alternative. Thus, the firm might be tempted to offer suboptimal prices to the consumer to prevent her from investing in local generation.

This paper's first contribution is to give the optimal investment strategy for both players without any assumptions in the model on the energy price. The optimal investment strategies are solutions of stochastic control problems with random coefficients. Moreover, in the case of the consumer's problem, the penalisation of the variance leads to a new type of stochastic control problem, called  McKean-Vlasov control problems. Both problems admit closed-form solutions depending on the price process of the centralised energy. Using only the hypothesis that the price process admits a long-term stationary value, we derive the optimal long-term behaviour of each player. In the long-term, the consumer's cumulated distributed capacity will reach a positive limit, as long as the long-term price of centralised energy plus transmission cost  is greater than the annuity of distributed energy (total discounted investment costs). In particular, the consumer decides her long-term cumulated capacity based on the centralised energy price compared to the distributed energy cost and her aversion to intermittency. Centralised energy costs which are too high can result in investment that exceeds consumers' demand. The firm's cumulated capacity will be equal to the residual demand of the consumer (demand minus installed distributed capacity). The independence of these results with  respect to the price models guarantees their robustness. 

Our second contribution is to provide the conditions for the existence and characterisation of a long-term Pareto optimum between the distributed and centralised generations. We compute this optimum and give conditions under which it can be implemented by  setting a regulated price for centralised energy.  Once again, we obtain these results by considering the centralised energy price as a random parameter, that is without making any model hypothesis. 

We find that if the price of the centralised energy admits a stationary distribution, then the levels of installed centralised and distributed generation determined by the social planner admits stationary limits. The long-run level of centralised generation is equal to the residual demand of the consumer while the level  of the distributed generation nontrivially depends  on the costs of both the distributed and centralised technology costs. However, some limiting cases are worth noting. In the limiting case where there are transmission costs and no carbon tax, distributed generation admits a long-term positive optimum value. This value increases with the long-run value of the centralised energy price, decreases with the distributed investment cost annuity and with the intermittency measured by the variance of the distributed generation. This long-run optimum is achieved at an exponential rate. When there is neither a carbon tax nor transmission costs, there is no investment in distributed generation in the long-run, whatever the respective investment costs of both technologies: The social planner who is risk-averse to intermittency will invest in high cost dispatchable technology rather than in low cost intermittent technology.  This unexpected result is not a consequence of the variance penalisation, but is actually induced by  the (non-random) demand satisfaction objective.  Indeed, the intermittency of distributed capacity always results in a positive cost whereas dispatchable centralised technology can cancel this cost by  perfectly matching the demand.  
 
Moreover, we can implement a stationary Pareto optimum price. By fixing the long-run value of the centralised energy price, the social planner can achieve the Pareto optimal long-run value of distributed generation, under certain explicit conditions. This asymptotic Pareto optimum price  can be interpreted as a long-term contract offered to the firm to guarantee its return on investments. We find that as soon as the distributed energy per unit cost has reached a certain level, the free  interaction between the technologies will lead to an equilibrium in the long-run. Thus, for a given target  level of distributed energy, there is a distributed energy cost under which subsidies are no longer required to achieve an asymptotic Pareto optimum. This result somehow tempers the above cited conclusion of Green and Léautier (2015) \cite{Green15}. We extend our model to the situation of multiple consumers to check whether or not the results depend on the reduction of the interaction between technologies to a single representative agent. We find that the same general results still apply, up to a correcting term given by the return to scale provided by the cost reduction induced with a large number of players. Moreover, some important limiting cases admit further analysis. In particular, we find that the implementation of the asymptotic Pareto optimum relies on the existence of a positive carbon tax. Contrary to the case where the social planner could enforce the long-run equilibrium by choosing himself the level of investment, the use of a price requires a positive carbon tax to ensure a positive distributed generation. In this situation, the transmission costs are no longer enough to sustain the development of distributed energy sources on the long-run.

Our third contribution is the study of a the Stackelberg equilibrium where the firm is the leader. This modelling represents an unregulated market interaction between the two players. In this game, the firm chooses both its installed capacity and its selling price and then, the consumer chooses her level of installed solar panels. Because the firm determines its price and the consumer's demand is inelastic, a Nash equilibrium is not a realistic consideration while the Stackelberg equilibrium captures the asymmetry of the two players. We provide an explicit condition under which there is an asymptotic Stackelberg equilibrium that we compute. We find that it leads to significant differences than in the Pareto optimum such as a higher price for the centralised energy and a significant over-investment in distributed energy.
 
As a result, this comparative analysis of different interactions between the firm and the consumer shows that if there is not a large carbon emission price, it is unlikely that we can achieve a social welfare Pareto optimum either by letting the players freely interact or by a public intervention limited to a regulated centralised energy price. 

The outline of the paper is as follows. Section~\ref{sec:Model} presents our model and its hypotheses. In Section~\ref{sec:Players} we describe the optimal behaviour of each player (firm, consumer, social planner). Section~\ref{sec:Equilibres} provides the different possible equilibria. Section~\ref{sec:Extensions} provides extensions with a large number of firms and consumers, and we conclude in Section \ref{sec:conclu}.

\section{Model and problem formulation}
\label{sec:Model}
\setcounter{equation}{0}
\setcounter{Assumption}{0} \setcounter{Theorem}{0}
\setcounter{Proposition}{0} \setcounter{Corollary}{0}
\setcounter{Lemma}{0} \setcounter{Definition}{0}
\setcounter{Remark}{0}

We consider an interaction model on electricity market  that involves the following actors: there is a firm (he) which provides electricity with centralised fully dispatchable generation (typically coal- or gas-fired plants) to a representative consumer who can also produce her own  electricity by investing in renewable energy, typically by purchasing solar panels.  The consumer's demand for electricity is then satisfied both with the distributed generation of the solar panels and with the centralised generation bought at the price set by the firm. The problem for the (representative) consumer is then to decide how many solar panels she must install. On the other hand, the firm has to satisfy the residual demand of the consumer with its generation plants while minimising his production costs and taking into account carbon taxes due to emission of thermal energy. Further, we consider a third player in this game, namely a social planner, who is typically looking for a price of the centralised generation, which is Pareto-optimal for the firm and the consumer. 

We now formulate mathematically the model and the optimisation problem for each actor in this three-player game. We fix some probability space $(\Omega,\Fc,\P)$ equipped with a filtration $\F$ $=$ $(\Fc_t)_{t\geq 0}$ satisfying the usual conditions and supporting  two independent standard Brownian motions $W$ and $W^0$. The consumer's cumulative net available capacity of distributed generation at any time $t$ is denoted by $X_t^\alpha$ and is governed by:
\beq \label{dynX}
dX_t^\alpha &=& b \alpha_t dt + \sigma X_t^\alpha dW_t, \qquad t \geq 0, \qquad X_0^\alpha = x_0 \in \R, 
\enq
where the $\F$-adapted control process $\alpha$ $=$ $(\alpha_t)_{t\geq 0}$ represents the solar panel installation rate ($\alpha_t$ $>$ $0$  means that the consumer buys the at time $t$, while $\alpha_t$ $<$ $0$ means that the consumer sells them), $b > 0$ is the load factor of the solar panels. Typically, the installation rate $\alpha_t$ is measured in megawatt per unit of time, and $b$ is a dimensionless parameter.   
The parameter $\sigma$ $>$ $0$ is a volatility coefficient related to the uncertainty in the net electricity available capacity of photovoltaics (its net capacity depends on weather conditions), and the proportional term w.r.t.~$X_t^\alpha$ in the noise means that the uncertainty increases with the level of generation. {The volatility term of the intermittent generation technology has been chosen to reflect that the more the consumer relies on this technology, the more he has to cope with uncertainty. Our model is not a geographical model where the total generation of intermittent electricity sources could result in a smoother generation by an aggregation effect. The form of the volatility captures the idea that the consumer faces a risk of a low production whatever the level of installed capacity. This point is a concern of many transmission system operators managing electric network with a high level of renewable energy penetration (see German Energy Regulator Annual Report on the constant increase of redispatching cost to cope with intermittent energy sources \cite{Bunde18}, Fig.~64, p. 173, and Fig.~65, p.~174).} The demand process of the (representative) consumer, denoted by $D$, is an exogenous factor in our model, and is assumed to be constant. We justify this hypothesis in the following way: The problem of the consumer as well as the social planner is to make an arbitrage between non-emissive intermittent technologies and emissive dispatchable technologies. Uncertainty on the demand only complicates the computations without bringing more insights on the choices of the consumer or the social planner. Another way of justifying this hypothesis is to consider that the volatility $\sigma$ is the net uncertainty generated by the distributed generation plus demand. In order to satisfy the consumer's residual demand (i.e.~the electricity demand not fulfilled by the photovoltaic) at any time $t$, the consumer purchases electricity from the firm at price $P_t$ as chosen by the firm. The price process $(P_t)_{t\geq 0}$ should not be considered as a real-time pricing process, typically the day-ahead hourly spot price that is quoted on electricity markets. It is more pertinent to see this price as the annual tariff that appears on consumers' electricity bill because the price is announced in advanced to the consumer by the firm in the case of a Stackelberg equilibrium. For this reason, the price process $P$ $=$ $(P_t)_{t\geq 0}$ is  adapted with respect to the filtration $\F^0$ $=$ $(\Fc_t^0)_{t\geq 0}$ generated by $W^0$. In other words, we assume that the price of the centralised generation is not affected by local issues like the weather, but only by macroeconomic and market factors. However, we do not make {\em a priori} model assumptions on the price process since our ultimate goal is to endogenously derive the price by equilibrium arguments. 
Over an infinite horizon, the expected total discounted cost of the consumer for an installation rate  $\alpha$ is: 
\beq \label{totalcost}
J_1(\alpha) &= & 
\E \Big[ \int_0^\infty e^{-\rho t} \Big( C_1(\alpha_t) + (1+\th) P_t (D - X_t^\alpha) +  \eta {\rm Var}[X_t^\alpha] \Big) dt \Big],  
\enq
where $\rho$ $>$ $0$ is some discount factor which we assume is  large enough for ensuring the well-posedness of our problem. The first term $C_1(\alpha_t)$ represents the instantaneous installation cost for photovoltaics where $C_1$ a convex function on $\R$, $C_1(0)$ $=$ $0$ which we assume in the sequel to be in the quadratic form:
\beqs \label{quadracout}
C_1(a) &=& c a + \gamma a^2, 
\enqs
where $c$ $\geq$ $0$ is the per unit constant cost of distributed power, and $\gamma$ $>$ $0$ is the market impact factor. 

The hypothesis of a constant cost $c$ of distributed energy is challenging because in real life, an important decrease in per unit cost of renewable energy sources in the last 20~years has occurred because of both economies of scale, learning by doing and technological progress (see Reichelstein et al.~\cite{Reichelstein13} and Rubin et al.~\cite{Rubin15}). Nevertheless, the study of the equilibrium between centralised and distributed generation when consumer has access to a technology at constant per unit cost $c$ already reveals a lot about their interaction. Furthermore, we present in Section \ref{sec:Extensions} an extension where we relax the hypothesis of a constant per unit cost of solar panels to take into account an increasing return to scale. 
The quadratic term of the cost function captures the fact that the marginal cost of 1 MW of solar panels will increase if the buying rate is too fast. If the representative consumer wants to increase her installation rate, solar panels production firms need to buy resources at higher prices and thus increase their price. This phenomenon was observed during the renewable energy boom (see \cite{Wiser11} p. 51, Fig. 32). Further, the small negative $a$ corresponds to a situation where the consumer sells back her solar panels. In this sense, the investment problem for the consumer has a reversibility property. The second term $(1+\theta) P_t (D - X_t^\alpha)$ is the (algebraic) cost for satisfying the residual demand with the centralised electricity. The parameter $\theta$ represents the costs that add up to the energy price $P_t$ on the consumer's electricity bill due to the necessary transmission and distribution infrastructure required to bring centralised generation to her household. In this regard, $\theta$ measures the extra cost induced by centralised generation compared to distributed generation. It is not small. Transmission costs  can represent as much as two-thirds of the consumer's bill, leading to an $\theta =2$. In this model, we assume that $\theta$ is exogenous and fixed and leave for future research the potential strategic effect it might generate between distributed energy adopters and non adopters (see \cite{Brown17} for a description of the effect of metering on the repartition of network costs between different type of consumers). The distributed generation might potentially exceed the demand which would correspond to a situation where the consumer sells the excess generation to the firm. But since our model is closed (there is no other consumer to whom this excess generation could be sold), we place conditions on the definition of an equilibrium that excludes this case {(for this reason, we keep the cost functions unchanged when the distributed generation exceeds the demand)}. The third term represents the preference of the consumer for low-variance policies (quantified by the penalty parameter $\eta$ $>$ $0$) so as to get stable flux in the distributed generation, and thus  be less sensitive  to the market risk of a centralised electricity price. The consumer has to decide how much distributed generation she has to self-produce from the solar panels, and her goal is to minimise over the installation rate control  $\alpha$ the expected total discounted cost $J_1(\alpha)$  in \reff{totalcost}. The solution to this problem will be studied in Section \ref{secconsu}. Notice that the corresponding optimal self-production $\hat X$ $=$ $\hat X(P)$ for the consumer depends on the thermal price $P$. 

On the other hand, knowing  the (optimal) distributed generation of the consumer, the firm has to choose its centralised generation capacity for satisfying the (residual) demand of the consumer while taking into account the operational costs, the gain from the sale of electricity to the consumer, and carbon taxes for carbon emission.  We denote $Q^\nu$ as the firm's cumulative production of centralised generation given by the dynamics
\beq
\label{dynQ}
dQ_t^\nu &=& \nu_t dt, \qquad t \geq 0, \qquad  Q_0^\nu = q_0 \in \R, 
\enq
where $\nu$ $=$ $(\nu_t)_{t\geq 0}$ is an $\F$-adapted real-valued process, which represents the production installation rate. However, compared to the generation from distributed generation,  there is no uncertainty in centralised generation  (no stochastic noise on the evolution of $Q^\nu$) which captures the fact that centralised generation is fully dispatchable. 
Given a level of distributed generation capacity $X^\alpha$, the aim of the energy firm is to minimise over its production rate $\nu$ of centralised generation capacity the expected total discounted cost:
\beq
J_2(\nu;X^\alpha) &=& \E \Big[ \int_0^\infty e^{-\rho t} \Big( C_2(\nu_t)  + \lambda\big(D - X^\alpha_t-Q_t^\nu\big)^2 \nonumber \\
& & \;\;\;\;\; - \;  P_t (D - X_t^\alpha)   +    \pi_t^{\alpha,\nu} (D - X_t^\alpha)    \Big) dt \Big].  \label{costenergy}
\enq
The first term in \reff{costenergy} represents the production cost, and therefore we consider a quadratic cost function of
\beqs
C_2(u) &=& h u^2, 
\enqs
for some positive constant $h$ $>$ $0$. Contrary to the investment cost function of the consumer, there is no linear term in the firm's investment cost function. We do not add this linear term because the investment in centralised generation capacity is irreversible (at least, much more irreversible than investing in distributed generation). Increasing or decreasing capacity induces important costs to the firm.  The second term  is a quadratic penalisation, which constrains (more and less depending on the size of the parameter $\lambda > 0$) the firm to fit the residual demand of the consumer with its generation. {In electric systems, generation is always equal to consumption. Thus, the commitment of the producer to satisfy the consumer's electricity demand translates into an almost sure constraint stating $Q^\nu_t = D - X^\alpha_t$. But, because $Q^\nu$ and $X^\alpha$ are controlled processes, it is not possible to ensure this equality almost surely. A natural way to circumvent this difficult in a highly stylised model of power system is to resort to a penalisation of the demand satisfaction constraint. The term $\lambda (D - X^\alpha_t - Q^\nu_t)^2$ can be interpreted as a penalisation of the demand satisfaction constraint in an augmented Lagrangian, a technique which is common in unit commitment model to find an optimal dispatch \cite{Renaud93}.}

The third term is the gain from the sale of centralised generation to the consumer at price $P_t$. Although the consumer pays the centralised energy $(1+\th) P_t$, the firm only receives $P_t$. The fourth term represents a carbon tax for the emission of  thermal energy, where we set
\beqs
\pi^{\alpha,\nu}_t &=& \frac{X^\alpha_t}{D} \pi_0 + \frac{Q^\nu_t}{D} \pi_1, \;\;\; t \geq 0, 
\enqs
for some constants $0$ $\leq$ $\pi_0$ $<$ $\pi_1$, as the amount of carbon tax to be paid for one unit of energy sold at time $t$. 
Since the demand is expected to fit the total production from distributed and centralised generations, $D$ $\simeq$ $X_t^\alpha + Q_t^\nu$, then  $\pi_t^{\alpha,\nu}$ is approximately a convex combination of $\pi_0$ and $\pi_1$ and is valued in  $[\pi_0,\pi_1]$. It increases with the level of emissive centralised generation, is close to $\pi_1$ when the demand is mostly satisfied by thermal production, and is close to $\pi_0$ when the distributed generation covers almost all the demand. We choose the parameter $\pi_0$ close to the smallest value of observed carbon emission allowances price ($\pi_0 \approx 2$~euro per metric ton) whereas we choose $\pi_1$ close to the penalty value of firm's missing their emission target  ($\pi_1 \approx 100$ per metric ton). Making both $\pi_0$ and $\pi_1$ equal to zero allows to study the case of an equilibrium with centralised non-emissive technology. The solution to this problem will be studied in Section \ref{seccomp}. Note that the corresponding optimal centralised capacity $\hat Q$ $=$ $\hat Q(X^\alpha)$ for the firm depends on the distributed generation $X^\alpha$ of the consumer. 
  
Besides the consumer and the centralised firm, we consider a {\it social planner}, whose aim is to minimise over the investment rate pair $(\alpha,\nu)$  the sum of expected costs from  the distributed and centralised generations:  
\beqs
J(\alpha,\nu) &=& J_1(\alpha) + J_2(\nu;X^\alpha) \\
&=& \E \Big[ \int_0^\infty e^{-\rho t} \Big( C_1(\alpha_t) + C_2(\nu_t) + (\th P_t + \pi_t^{\alpha,\nu}) (D- X_t^\alpha) \\
& & \;\;\;\;\;  + \;   \eta {\rm Var}[X_t^\alpha] +  \lambda\big(D - X^\alpha_t-Q_t^\nu\big)^2  \Big) \Big].  
\enqs
This problem will be studied in Section \ref{secsocial}. The corresponding optimal distributed and centralised generation capacities are denoted by  $(X^*,Q^*)$ $=$ $(X^*(P),Q^*(P))$. The cost function of the social planner still depends on the centralised energy price $P$. The energy price is not a pure transfer price from the consumer to the firm. It entails a social cost due to the need for the transmission infrastructure. The chief goal of the social planner is to endogenise the electricity market price $P$ by equilibrium arguments. We first consider the concept of  Pareto-optimality for the consumer and the energy firm, which is such that the optimal distributed generation capacity for the consumer and the optimal centralised generation capacity for the firm coincides with the solution derived from the social planner's problem. In other words, the mathematical problem is to determine a price process $P^*$ such that $\hat X(P^*)$ $=$ $X^*(P^*)$ and $\hat Q(\hat X(P^*))$ $=$  $Q^*(P^*)$.  We also consider the concept of equilibrium price when focusing on a two-player game between the firm and the consumer (hence without the social planner). In this context, we view the firm as a {\it leader} which offers a price to the consumer (the {\it follower}), and the endogenous price results from a Stackelberg equilibrium where the leader optimises the expected total cost through the price and its investment rate given the best reaction effort (the distributed generation) of the follower. These two equilibrium issues will be discussed and addressed in Section \ref{secequil}.

\section{Optimal behaviour of the players}
\label{sec:Players}
In this section, we address the optimisation problems for each player (consumer, firm and social planner) and determine their optimal generation investment rate explicitly. 

\subsection{The consumer's problem} \label{secconsu}

\setcounter{equation}{0}
\setcounter{Assumption}{0} \setcounter{Theorem}{0}
\setcounter{Proposition}{0} \setcounter{Corollary}{0}
\setcounter{Lemma}{0} \setcounter{Definition}{0}
\setcounter{Remark}{0}

From the expression \reff{totalcost} of the expected total discounted cost of the consumer, and remo\-ving the term $P_tD$ that does not depend on the installation rate control $\alpha$, the consumer's problem is written as
\beq \label{defVC} 
V^C &=& \inf_{\alpha\in\Ac} \E \Big[ \int_0^\infty e^{-\rho t} f_t(X_t^\alpha,\E[X_t^\alpha],\alpha_t) dt \Big],  
\enq
where $\{f_t(x,\bar x,a), (x,\bar x,a) \in \R^3, t \geq 0\}$ is the $\F^0$-adapted random field defined by
\beq \label{deff}
f_t(x,\bar x,a) &=& ca + \gamma a^2  - (1+\th)P_t x + \eta (x^2 - \bar x^2).
\enq
Here $\Ac$ is the set of admissible controls defined as  the set of $\F$-adapted real-valued processes $\alpha$ s.t.~$\E[\int_0^\infty e^{-\rho t} |\alpha_t|^2 dt]$ $<$ $\infty$. We  assume that the positive discount factor $\rho$ satisfies:
\beq
\label{rhosigma}
\rho & > &  \sigma^2,
\enq
which implies from It\^o's formula on \reff{dynX}, Young's inequality and Gronwall's lemma  that
\beq \label{integX}
\E \Big[ \int_0^\infty e^{-\rho t} |X_t^\alpha|^2 dt \Big] & \leq & \left( x_0^2 + \frac{b}{\epsilon} \int_0^\infty e^{-\rho s} \E[|\alpha_s|^2] ds \right) \int_0^\infty e^{-(\rho - \sigma^2 - \epsilon b)s} ds \; < \; \infty, \,\,\,\,\, 
\enq
for $\epsilon>0$ small enough. We also assume that the price process satisfies the square-integrability condition
\beq \label{integP}
\E \Big[ \int_0^\infty e^{-\rho t} |P_t|^2 dt \Big] & < & \infty, 
\enq
which  ensures by \reff{integX}  that problem \reff{defVC} is well-defined. 
Problem \reff{defVC} is a nonstandard stochastic control problem due to the variance term, which induces {\em a priori} time inconsistency. It belongs to the class of so-called McKean-Vlasov (MKV) control problems where the running cost $f_t$ in \reff{deff} involves the law of the state process $X^\alpha$ in a nonlinear dependence coming from the square of its mean. McKean-Vlasov control problems have received a surge of interest in the last few years in connection with the mean-field game theory initiated by Lasry and Lions, and we refer for example  to \cite{benetal13}, \cite{cardel17}, and \cite{phawei16} for recent works on this topic. Problem \reff{defVC} falls more specifically into  the class of linear-quadratic MKV control problems with random coefficients that have been recently studied in \cite{baspha17}. The method of resolution of \reff{defVC} is based on a mean version of martingale optimality principle in dynamic programming leading to the following verification argument.
 
\begin{Lemma}[{\em Optimality principle}] 
\label{lemverif}
Let $\{ V^\alpha, \alpha \in \Ac\}$  be  a family of $\F$-adapted processes in the form 
$V_t^\alpha$ $=$ $v_t(X_t^\alpha,\E[X_t^\alpha])$, $t\geq 0$, for some random field $\F$-adapted process 
$\{v_t(x,\bar x)$, $(x,\bar x) \in \R^2$, $t \geq 0\}$ satisfying the following conditions:
\begin{itemize}
\item[(i)] there exist a positive constant $C$ and a $\F$-adapted process $(I_t)_{t \geq 0}$ such that $\E[e^{-\rho T} I_T]$ $\rightarrow$ $0$ as $T \to \infty$ and that $|v_t(x,\bar x)| \leq C( I_t + |x|^2 + |\bar x|^2)$, for $t \geq 0$ and $(x,\bar x) \in \R^2$;
\item[(ii)] for all $\alpha$ $\in$ $\Ac$, the map $t$ $\in$ $\R_+$ $\mapsto$ $\E[S_t^\alpha]$, with 
$S_t^\alpha$ $=$ $e^{-\rho t} V_t^\alpha + \int_0^t e^{-\rho s} f_s(X_s^\alpha,\E[X_s^\alpha],\alpha_s) ds$, is nondecreasing;
\item[(iii)] there exists some $\hat\alpha$ $\in$ $\Ac$ such that the map $t$ $\in$ $\R_+$ $\mapsto$ $\E[S_t^{\hat\alpha}]$ is constant.
\end{itemize}
Then $\hat\alpha$ is an optimal control for problem \reff{defVC} and $V^C$ $=$ $\E[v_0(x_0,x_0)]$. Moreover, any other optimal control satisfies the condition (iii). 
 \end{Lemma}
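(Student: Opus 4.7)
The plan is to run a mean-field version of the martingale optimality principle: condition (ii) makes $t \mapsto \E[S_t^\alpha]$ a nondecreasing real-valued function, condition (iii) promotes this to constancy along $\hat\alpha$, and condition (i) supplies the transversality estimate at infinity that lets one pass from the finite-horizon inequality $\E[S_T^\alpha] \geq \E[S_0^\alpha] = \E[v_0(x_0,x_0)]$ to an infinite-horizon lower bound on $J_c(\alpha)$. Concretely, for an arbitrary $\alpha \in \Ac$ and $T > 0$, condition (ii) yields
\begin{equation*}
\E[v_0(x_0,x_0)] \;\leq\; \E\big[e^{-\rho T} v_T(X_T^\alpha, \E[X_T^\alpha])\big] \,+\, \E\Big[\int_0^T e^{-\rho s} f_s(X_s^\alpha,\E[X_s^\alpha],\alpha_s)\, ds\Big].
\end{equation*}

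I would then let $T \to \infty$. By the growth bound in (i),
\begin{equation*}
\big|\E[e^{-\rho T} v_T(X_T^\alpha, \E[X_T^\alpha])]\big| \;\leq\; C\Big(\E[e^{-\rho T} I_T] + \E[e^{-\rho T}|X_T^\alpha|^2] + e^{-\rho T}|\E[X_T^\alpha]|^2\Big),
\end{equation*}
where the first summand vanishes by hypothesis on $I$, and the remaining two are handled via the transversality estimate $\E[e^{-\rho T}|X_T^\alpha|^2] \to 0$ discussed below. The running-cost integral converges to $J_c(\alpha)$ by dominated convergence, where integrability of each piece of $f_s$ on $\R_+$ (after weighting by $e^{-\rho s}$) is obtained from admissibility of $\alpha$, \reff{integP} and \reff{integX} via Cauchy-Schwarz. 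This gives the lower bound $V^C \geq \E[v_0(x_0,x_0)]$. For the matching upper bound, repeating the argument along $\hat\alpha$ and invoking (iii) gives $\E[S_T^{\hat\alpha}] = \E[v_0(x_0,x_0)]$ for every $T$, hence $J_c(\hat\alpha) = \E[v_0(x_0,x_0)]$, which proves optimality of $\hat\alpha$ and the value formula $V^C = \E[v_0(x_0,x_0)]$. The remaining clause follows by noting that if $\alpha^{\ast}$ is another optimiser, the same passage to the limit gives $\lim_{T\to\infty} \E[S_T^{\alpha^{\ast}}] = J_c(\alpha^{\ast}) = V^C = \E[S_0^{\alpha^{\ast}}]$, and combined with the monotonicity of (ii) this forces $t \mapsto \E[S_t^{\alpha^{\ast}}]$ to be constant.

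The main obstacle is the transversality estimate $\E[e^{-\rho T}|X_T^\alpha|^2] \to 0$, since the crude quadratic growth in (i) combined with the finite-integral bound \reff{integX} gives only Ces\`aro-type decay. The clean route is to apply It\^o to $e^{-\rho t}|X_t^\alpha|^2$ under the dynamics \reff{dynX}, which, after taking expectations and using \reff{rhosigma}, yields
\begin{equation*}
\E[e^{-\rho T}|X_T^\alpha|^2] \;=\; x_0^2 + \int_0^T e^{-\rho s}\, \E\big[(\sigma^2 - \rho)|X_s^\alpha|^2 + 2b X_s^\alpha \alpha_s\big]\, ds.
\end{equation*}
Young's inequality on the cross term, together with \reff{integX} and the admissibility of $\alpha$, places the integrand in $L^1(\R_+)$, so the right-hand side converges to a finite limit as $T \to \infty$; combined with the fact that $t \mapsto \E[e^{-\rho t}|X_t^\alpha|^2]$ is itself in $L^1(\R_+)$ by \reff{integX}, a nonnegative function in $L^1(\R_+)$ that admits a limit at infinity can only tend to zero. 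This is precisely where \reff{rhosigma} enters, and once it is in hand the rest is routine bookkeeping.
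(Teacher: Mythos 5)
Your proof is correct and follows essentially the same route as the paper: the finite-horizon inequality from (ii), passage to the limit using (i) together with the vanishing of $\E[e^{-\rho T}|X_T^\alpha|^2]$, the matching equality from (iii), and the monotonicity argument for the final clause. The only difference is that you explicitly justify the transversality estimate via It\^o's formula and the $L^1$-plus-existence-of-limit argument, a step the paper asserts directly from (i) and \reff{integX}; this is a welcome clarification rather than a change of method (note only the minor notational slip of writing $J_c(\alpha)$ for the reduced cost $\E[\int_0^\infty e^{-\rho s} f_s^\alpha\, ds]$ whose infimum is $V^C$).
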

\noindent {\bf Proof.} Let $\alpha \in \aaa$ and $T>0$. Setting $f_t^\alpha$ $:=$ $f_t(X_t^\alpha,\E[X_t^\alpha], \alpha_t)$, from (ii) we get
\beq 
\E[v_0(x_0,x_0)] \; = \; \E[V_0^\alpha] \; = \; \E[S_0^\alpha] \leq \E[S_T^\alpha] \; = \; \E \Big[ e^{-\rho T} V_T^\alpha  +  \int_0^T e^{-\rho s} f_s^\alpha ds \Big]. 
\label{inegv} 
\enq
By (i) and \reff{integX} we have $\E [e^{-\rho T} |V_T^\alpha|] \to 0$ as $T \to \infty$; hence, by sending $T$ to infinity into \reff{inegv} and by using the dominated convergence theorem ($f_t$ satisfies a quadratic growth condition), we have $\E[v_0(x_0,x_0)] \leq \E[\int_0^\infty e^{-\rho s} f_s^\alpha ds]$ and then $\E[v_0(x_0,x_0)]$ $\leq$ $V^C$, since $\alpha$ is arbitrary in $\Ac$. Similarly, by (iii) we obtain $\E[v_0(x_0,x_0)]$  $=$ $\E[\int_0^\infty e^{-\rho s} f_s^{\hat\alpha} dt]$ $\geq$ $V^C$, so that $\hat\alpha$ is an optimal control and $V^C$ $=$ $\E[v_0(x_0,x_0)]$. Suppose now that $\tilde\alpha$ $\in$ $\Ac$ is another optimal control. For all $T>0$ we have
\beqs
\E[S_0^{\tilde\alpha}] \; = \; V^C &=& \E\big[\int_0^\infty e^{-\rho t} f_t^{\tilde\alpha} dt \big] \; = \; \E[ S_T^{\tilde\alpha} ] 
+ \E\big[ \int_T^\infty f_s^{\tilde\alpha} ds - e^{-\rho T} V_T^{\tilde\alpha} \big],
\enqs
so that, by sending $T$ to infinity, we get $\E[S_0^{\tilde\alpha}]$ $=$ $\lim_{T\rightarrow\infty}\E[S_T^{\tilde\alpha}]$; since the map $t$ $\mapsto$ $\E[S_t^{\tilde\alpha}]$ is nondecreasing, this shows that this map is actually constant. 
\ep

\begin{Remark} \label{remmar}
{\rm The verification conditions (ii) and (iii) in Lemma \ref{lemverif} are weaker than the usual conditions for martingale optimality principle in stochastic control (see \cite{elk81}), which require  that $S^\alpha$ is a supermartingale for all $\alpha$, and a martingale for some $\hat\alpha$. 
In the case of a standard (without McKean-Vlasov dependence) stochastic control problem, one looks for a family of processes $V^\alpha$ in the form $V_t^\alpha$ $=$ $v_t(X_t^\alpha)$, and the classical martingale optimality principle is used in practice by applying It\^o's formula to $S_t^\alpha$ and then 
deriving a dynamic programming Hamilton-Jacobi-Bellman equation   (or backward stochastic differential equation in the case of random coefficients) for the (random) value function $v_t(x)$  
from the non-positivity of the drift of $S^\alpha$ for any $\alpha$ (supermartingale condition), and the cancellation of the drift of 
$S^{\hat\alpha}$ (martingale condition). In our McKean-Vlasov framework, this martingale optimality principle cannot be used in practice for finding the solution to the control problem, and we instead exploit the weaker version in Lemma \ref{lemverif} where the optimality principle is formulated on the mean of 
$S^\alpha$. This is the purpose of the next paragraph.
}
\ep
\end{Remark}

\subsubsection{Optimal distributed generation}

We explain in this paragraph how to exploit the mean martingale optimality principle in Lemma \ref{lemverif} for solving problem \ref{defVC}. 
The technical details and complete rigorous derivation are postponed in Appendix \ref{AppA1}. 
Given the linear-quadratic structure of our MKV control problem, we look for a candidate random value function $v_t(x,\bar x)$ in the form: 
\beq
\label{CONScandidate}
v_t(x,\bar x) &=& K_t ( x- \bar x)^2 + \Lambda_t \bar x^2  + Y_t x + R_t, 
\enq
for some $\F$-adapted processes $K$, $\Lambda$, $Y$ and $R$ to be determined. Because the randomness in the coefficients of the consumer's problem comes only from the $\F^0$-adapted price process $P$ in the linear term in $f_t$, we search for deterministic $K$, $\Lambda$, and $\F^0$-adapted processes $Y$ and $R$, hence in the evolution form (recall that $\F^0$ is the filtration of the Brownian motion $W^0$): 
\beqs
dK_t \; = \; \dot K_t dt, & &  d \Lambda_t \; = \; \dot \Lambda_t dt, \\
dY_t \; = \; \dot Y_t dt + Z_t^Y dW_t^0, & & dR_t \; = \; \dot R_t dt + Z_t^R dW_t^0,
\enqs
for some deterministic processes $\dot K$, $\dot \Lambda$, and $\F^0$-adapted processes $\dot Y$, $\dot \Gamma$, $\dot R$, $Z^Y$, $Z^R$. Next, applying It\^o's formula to $S_t^\alpha$ $=$ $e^{-\rho t} v_t(X_t^\alpha,\E[X_t^\alpha])$ $+$ 
$\int_0^t e^{-\rho s} f_s(X_s^\alpha,\E[X_s^\alpha],\alpha_s)ds$, for $\alpha$ $\in$ $\Ac$, and taking the expectation, we get
\beqs
d\E[S_t^\alpha] &=& e^{-\rho t} \E[ \Dc_t^\alpha ] dt, 
\enqs
for some $\F$-adapted processes $\Dc^\alpha$ with 
\begin{equation}
\label{CONSdefD}
\E[ \Dc_t^\alpha] = \E \Big[ - \rho v_t(X_t^\alpha,\E[X_t^\alpha]) + \frac{d}{dt} \E\big[ v_t(X_t^\alpha,\E[X_t^\alpha])\big] + 
f_t(X_t^\alpha,\E[X_t^\alpha],\alpha_t) \Big], \;\; t \geq 0. 
\end{equation}
According to Lemma \ref{lemverif}, we have to determine $K$, $\Lambda$, $Y$, $\Gamma$, and $R$ so that
\beq \label{conDc}
 \; \E[\Dc_t^\alpha] \; \geq \; 0,  \; t \geq 0, \forall \alpha\in \Ac, &\mbox{and}&  \; \E[\Dc_t^{\hat\alpha}] \; = \; 0, \; t \geq 0, 
\mbox{ for some } \;  \hat\alpha \in \Ac,
\enq
which ensures that the mean optimality principle conditions (ii) and (iii) are satisfied, and then $\hat\alpha$ will be an optimal control.  For this, we apply It\^o's formula to $v_t(X_t^\alpha,\E[X_t^\alpha])$ recalling the quadratic form of $v_t(\cdot,\cdot)$, the dynamics of $X^\alpha$ in \reff{dynX}, hence of its mean: 
$d\E[X_t^\alpha]$ $=$ $b \E[\alpha_t] dt$, and after some calculations together with square completion, we obtain: 
\beq
\label{CONSito}
\E[\Dc_t^\alpha]  &=& \E \Big[ \gamma \big( \alpha_t  - A(X_t^\alpha,\E[X_t^\alpha],K_t,\Lambda_t,Y_t) \big)^2  \\
& & \;\;\;\;\; + \;  F(K_t, \dot K_t) \big(X_t^\alpha  - \E[X_t^\alpha] \big)^2  + G(K_t, \Lambda_t, \dot \Lambda_t) \big(\E[X_t^\alpha] \big)^2 \nonumber\\
& & \;\;\;\;\; + \; H_t(K_t, \Lambda_t, Y_t,\eee[Y_t],\dot Y_t) X_t^\alpha + \; M(Y_t, R_t, \dot R_t)  \Big], \;\;\; t \geq 0, \;  \forall  \alpha\in\Ac, \nonumber
\enq
where $A$, $F$, $G$, $H$, and $M$ are explicit functions of their arguments (see Appendix \reff{AppA1}). It follows that condition \reff{conDc} is realised whenever
the following equations are satisfied
\beq
\label{CONSzerocoeff}
F(K_t, \dot K_t) \; = \; 0,  & & G(K_t, \Lambda_t, \dot \Lambda_t)  \; = \; 0, \;\;\; t \geq 0,  \\
H_t(K_t, \Lambda_t, Y_t,\eee[Y_t],\dot Y_t)  \; = \; 0, & &   M(Y_t, R_t, \dot R_t) \; = \; 0, \;\;\; t \geq 0, \nonumber
\enq
and 
\beq
\label{CONSimplicitcontr}
\hat\alpha_t &=&  A(X_t^{\hat \alpha},\E[X_t^{\hat \alpha}],K_t,\Lambda_t,Y_t), \;\;\; t \geq 0.   
\enq
The explicit resolution of these equations leads to our first main result which is a closed-form expression of the optimal distributed generation production for the consumer. 

\begin{Theorem} \label{theoconsu}
Let \eqref{rhosigma} and \eqref{integP} hold. The optimal solar panel installation rate is given by 
\beq 
\hat\alpha_t &=&  -\frac{bK}{\gamma} ( \hat X_t - \E[\hat X_t]) -\frac{b\Lambda}{\gamma} \E[\hat X_t] - \frac{c}{2\gamma} \frac{\rho}{\rho + \frac{b^2\Lambda}{\gamma}} \nonumber \\
& & \; + \;  \frac{b(1+\th)}{2\gamma}   \int_t^\infty e^{-(\rho+\frac{b^2K}{\gamma})(s-t)} \E[P_s  | \Fc_t^0] ds  \nonumber  \\
& & \; + \; \frac{b(1+\th)}{2\gamma} \int_t^\infty \Big( e^{-(\rho + \frac{b^2\Lambda}{\gamma})(s-t)} - e^{-(\rho + \frac{b^2K}{\gamma}) (s-t)} \Big) 
\E[P_s] ds, \;\;\; t \geq 0, 
\label{CONScontropt}
\enq
with the optimal cumulative production of distributed generation $\hat X$ given in mean by
\beq
\label{CONSmeanprod}
\E[\hat X_t ] &=&  x_0 e^{ - \frac{b^2\Lambda}{\gamma} t}  + \frac{b^2(1+\th)}{2\gamma} \int_0^t  e^{-\frac{b^2\Lambda}{\gamma}(t-s)}  
 \int_s^\infty  e^{-(\rho+\frac{b^2\Lambda}{\gamma})(u-s)} \E[ P_u]  du \, ds \nonumber \\
& & \;\;\; - \; \frac{\rho c}{b} \; \frac{1}{2 \Lambda(\rho + \frac{b^2\Lambda}{\gamma})}  \big( 1 - e^{-\frac{b^2\Lambda}{\gamma}t}   \big), \;\;\; t \geq 0,
\enq
where the positive constants $K$ $>$ $\Lambda$ are equal to
\beq
\label{CONSdefKL}
K = \gamma \frac{-(\rho - \sigma^2) + \sqrt{(\rho - \sigma^2)^2 + \frac{4b^2\eta}{\gamma}}}{2b^2},
\qquad\qquad
\Lambda = \gamma \frac{-\rho + \sqrt{\rho^2 + \frac{4b^2\sigma^2K}{\gamma}}}{2b^2}.  
\enq
\end{Theorem}

\vspace{3mm}
\noindent {\bf Interpretation and comments.} 
The expression \reff{CONScontropt} of the optimal solar panels installation rate consists in several explicit terms that have the following interpretation: the first term shows the  mean-reversion of the production towards its mean due to the variance penalization in the criterion, and we notice that the speed of mean-reversion, proportional to $K$, increases with the penalty parameter $\eta$. The second term is related to the generation uncertainty of the solar panels, and decreases the installation rate all the more so as the current level of generation (on average) and its volatility $\sigma$ are high. The third term, which is negative, expresses the obvious feature that the higher is the fixed cost $c$ on solar panels installation, the smaller is the optimal rate of production. Finally, the fourth and fifth terms, depending on the electricity price, quantify the natural intuition that the higher is the expectation on the future price, the higher is the incentive to invest in photovoltaics for self-generation. 

In particular,  if we assume that the price $P_s$ is constant equal to $P$, the expression  \reff{CONScontropt}  takes the following form:
\beqs 
\hat\alpha_t &=&   -\frac{bK}{\gamma} ( \hat X_t - \E[\hat X_t]) -\frac{b\Lambda}{\gamma} \E[\hat X_t] 
+ \frac{b}{2\gamma} \frac{1}{\rho + \frac{b^2\Lambda}{\gamma}} \Big( (1+\theta) P - \frac{\rho c}{b}\Big).
\enqs
The last term for the  optimal investment rate  is a constant rate of installation whose sign depends on the relative value of buying centralised energy at price $(1+\theta) P$ compared to the annuity of investment in distributed energy $ \frac{\rho c}{b}$. If centralised energy is more costly than distributed energy, then the consumer installs distributed energy at a constant rate.

\subsubsection{Long-term self-generation}
\label{ssec:ltsg}
We address the long-term behaviour of the optimal distributed generation when the price process has a stationary level, that is,~$\E[P_t]$ $\rightarrow$ $\bar P$ for some positive constant $\bar P$ $>$ $0$, as $t$ goes to infinity. This stationarity occurs for example when the price process is a martingale, in which case $\bar P$ $=$ $P_0$, or when it is mean-reverting (following e.g.~an Ornstein-Uhlenbeck or a Cox-Ingersoll-Ross process) with a stationary level of $\bar P$. 
\begin{Proposition} \label{propconsulimit}
\label{prop:limitscons}
Let \eqref{rhosigma} and \eqref{integP} hold and assume that there exists $\bar P$ $>$ $0$ s.t.~$\lim_{t\rightarrow\infty}\E[P_t]$ $=$ $\bar P$. Then, the optimal cumulative production of distributed generation admits a stationary level:
\beq
\label{CONSdefXinfty}
\lim_{t\rightarrow\infty} \E[\hat X_t] \; = \;  \frac{ (1+\th) \bar P - \frac{\rho c}{b}}{2 \sigma^2 K} \; =: \; \overline{\hat X_\infty}(\bar P),  & \mbox{ and so } & 
\lim_{t\rightarrow\infty} \E[\hat \alpha_t] \; = \;  0.
\enq
\end{Proposition}
\noindent {\bf Proof.} The first limit follows immediately  from \eqref{CONSmeanprod} by noting that 
\beqs
& & \int_0^t  e^{-\frac{b^2\Lambda}{\gamma}(t-s)}  \int_s^\infty  e^{-(\rho+\frac{b^2\Lambda}{\gamma})(u-s)} \E[ P_u]  du \, ds \\
&=&  \int_0^\infty \int_0^\infty 1_{ \tau \in [0,t]} e^{-\frac{b^2\Lambda}{\gamma}\tau}  
e^{-(\rho+\frac{b^2\Lambda}{\gamma})r} \E[ P_{t-\tau+r}]  d\tau \, dr \\
& \stackrel{t\rightarrow\infty}{\longrightarrow} &   \int_0^\infty \int_0^\infty e^{-\frac{b^2\Lambda}{\gamma}\tau}    e^{-(\rho+\frac{b^2\Lambda}{\gamma})r}  \bar P d\tau dr \; = \; 
\bar P \frac{\gamma}{b^2 \sigma^2 K},
\enqs
by the dominated convergence theorem, and recalling that $\Lambda(\rho \gamma + b \Lambda^2) = \gamma \sigma^2K$. As for the second limit, notice that $\eee[\hat X_t] - x_0 = b \int_0^t \eee[\hat \h_s] ds$, so that $\int_0^\infty \eee[\hat \h_s] ds$ is finite, which implies $\lim_{t\rightarrow\infty} \E[\hat \alpha_t] = 0$.
\ep

\vspace{3mm}
\noindent {\bf Interpretation and comments.}  The spread $(1+\theta) \bar P - \frac{\rho c}{b}$ is the difference for the consumer between the cost of centralised energy and distributed energy ($\rho c/b$ is the annuity in \euro/MW/year of distributed energy). The consumer will invest in distributed energy only if this difference is positive. The consumer's investment will be proportional to this spread and inversely proportional to the volatility of the distributed generation (for reasonable parameter values, it is seen that $K$ weakly depends on  the volatility).  The case of a negative spread leads to a spurious negative long-term installed capacity because we allow the consumer to sell her generation to the firm. However, this long-term behaviour does not depend on the consumer's demand. Since the consumer is not compelled to make her generation close to her demand, the investment choice is purely driven by the relative prices of centralised versus distributed generation. Thus, high centralised energy prices might induce an over-investment in distributed generation.

\vspace{3mm}\noindent {\bf Numerical illustration.} To fix an idea of the order of magnitude of possible investment in distributed generation, consider the following values based on common order of magnitudes for the parameter of the model: $\rho =0.1$ per annum, $\sigma = 0.3$ per annum, $b=0.15$, a unitary cost $c=3000$~\euro/kW, a price of centralised energy of $\bar P$ $=$ 80~\euro/MWh and a transmission cost $\theta = 2$. The market impact parameter is taken to be small at a level of $\gamma=1$~\euro/MW$^2$/year to avoid overestimating its effect as well as the cost of variance of distributed generation that we take to be $\eta = 8760 \times 10$~\euro/MW$^2$/year. The resulting spread price $(1+\th) \bar P - \frac{\rho c}{b}$ expressed in \euro/MWh equals 11~\euro/MWh and the consumer will invest $\overline{\hat X_\infty}(\bar P)$ $=$ $288$~MW in solar energy. Dividing the market impact parameter $\gamma$ by 100 results in an installed distributed generation multiplied by 10. Multiplying the penalty parameter of the variance $\eta$ by 100 results in an installed distributed generation divided by 10.

\subsubsection{Mean-reverting price process}
We illustrate numerically the above results on the optimal distributed generation for the consumer when the price process is mean-reverting. Hence we consider dynamics in the form 
\beqs
dP_t &=& \kappa(\bar P - P_t) dt + \xi_t dW_t^0, 
\enqs
for some constants $\kappa$ $\geq$ $0$, $\bar P$ $>$ $0$, and where the (possibly random) volatility $\xi_t$ of the price process is a $\F^0$-adapted positive process such that
\begin{equation*}
\eee \Big[ \int_0^\infty e^{-\rho s} |\xi_s|^2 ds \Big] < \infty.
\end{equation*}
This modelling includes the cases where $P$ is a martingale ($\kappa$ $=$ $0$), $P$ is an Ornstein-Uhlenbeck process ($\xi_t$ $=$ $\xi$ positive constant), $P$ is governed by a Cox-Ingersoll-Ross process ($\xi_t$ $=$ $\xi\sqrt{P_t}$ for some $\xi$ $>$ $0$ that satisfies $2\kappa\bar P$ $>$ $\xi^2$). For such a mean-reverting process, we have
\beqs
\E[P_s | \Fc_t^0] &=& \bar P + (P_t - \bar P) e^{-\kappa(s-t)}, \;\;\; 0 \leq t \leq s;
\enqs
the optimal solar panel installation rate in Theorem \ref{theoconsu} can be written explicitly in terms of the current self-production level $\hat X_t$, the initial level $x_0$, the current price $P_t$, the initial value $P_0$, and the stationary level $\bar P$. If $b^2\Lambda-\kappa\gamma \neq 0$, then the optimal control is
\beqs
\hat\alpha_t &=&  - \frac{bK}{\gamma}\hat X_t   + \frac{b(K-\Lambda)}{\gamma} x_0  e^{-\frac{b^2\Lambda}{\gamma} t} \\
& & \; + \;   \frac{b(1+\th)}{2(b^2 K + (\kappa + \rho)\gamma)} (P_t - \bar P) \\
& & \; + \;  \frac{b^3(1+\th)(K-\Lambda)}{2(b^2 \Lambda + (\kappa + \rho)\gamma)} 
\Big[  \frac{1}{b^2\Lambda-\kappa\gamma}\big(e^{-\kappa t} -   e^{-\frac{b^2\Lambda}{\gamma} t} \big)    +       
\frac{1}{b^2K + (\kappa+\rho)\gamma} e^{-\kappa t}   \Big](P_0-\bar P)  \\
& & \; + \;  \frac{b (1+\th) \bar P - \rho c}{2\Lambda(b^2\Lambda + \rho\gamma)} \Big[ \Lambda + (K - \Lambda \big) (1- e^{-\frac{b^2\Lambda}{\gamma} t}) \Big], \;\;\; t \geq 0,
\enqs
and the average self-production level is:
\beqs
\E[\hat X_t] &=&  \frac{(1+\th) \bar P - \frac{\rho c}{b}}{2\sigma^2 K} + 
\frac{(1+\th)(P_0-\bar P)b^2\gamma}{2(b^2\Lambda + (\kappa+\rho)\gamma)(b^2\Lambda - \kappa \gamma)} e^{-\kappa t} \\
& & \; + \; \Big( x_0 - \frac{(1+\th) \bar P - \frac{\rho c}{b}}{2\sigma^2 K} - \frac{(1+\th)(P_0-\bar P)b^2\gamma}{2(b^2\Lambda + (\kappa+\rho)\gamma)(b^2\Lambda - \kappa \gamma)} \Big)
e^{-\frac{b^2\Lambda}{\gamma} t}, \;\;\; t \geq 0.
\enqs
If $b^2\Lambda-\kappa\gamma = 0$, then the optimal control is
\begin{align*}
\hat\alpha_t =& - \frac{bK}{\gamma}\hat X_t   + \frac{b(K-\Lambda)}{\gamma} x_0  e^{-\frac{b^2\Lambda}{\gamma} t} \\
& + \frac{b(1+\theta)}{2(b^2 K + (\kappa + \rho)\gamma)} (P_t - \bar P) \\
& +\frac{b^3(1+\theta)(K-\Lambda)}{2(b^2 \Lambda + (\kappa + \rho)\gamma)} 
\Big[ \frac{t}{\gamma} +       
\frac{1}{b^2K + (\kappa+\rho)\gamma} \Big] e^{-\frac{b^2\Lambda}{\gamma} t} (P_0-\bar P)  \\
& + \frac{b(1+\theta) \bar P - \rho c}{2\Lambda(b^2\Lambda + \rho\gamma)} \Big[ \Lambda + (K - \Lambda \big) (1- e^{-\frac{b^2\Lambda}{\gamma} t}) \Big], \;\;\; t \geq 0,
\end{align*}
and the average self-production level is:
\beqs
\E[\hat X_t] =  \frac{(1+\th) \bar P - \frac{\rho c}{b}}{2\sigma^2 K} + \Big( x_0 - \frac{(1+\th) \bar P - \frac{\rho c}{b}}{2\sigma^2 K} + \frac{(1+\th)(P_0-\bar P)b^2}{2(b^2\Lambda + (\kappa+\rho)\gamma)} t \Big)
e^{-\frac{b^2\Lambda}{\gamma} t}, \;\;\; t \geq 0.
\enqs
Further, $\E[\hat X_t]$ converges exponentially fast to its stationary level $\bar x(\bar P)$. 
We  plot in Figure~\ref{fig:onetrajectory} a sample trajectory  of the optimal control $\hat\alpha$, of the optimal production levels $\hat X$, and of the price process (an Ornstein-Uhlenbeck model for $P$). We also illustrate the convergence of the optimal production to its stationary level in Figure~\ref{fig:conv}. Further, we illustrate the effect of the variance penalty parameter $\eta$ by plotting 50 trajectories of the optimal cumulative self-production $\hat X$ in Figures~\ref{fig:eta2} and \ref{fig:eta8}. 

\vfill

\begin{figure}[h!]
	\begin{minipage}{0.45\textwidth}
		\centering
		\includegraphics[width=\textwidth]{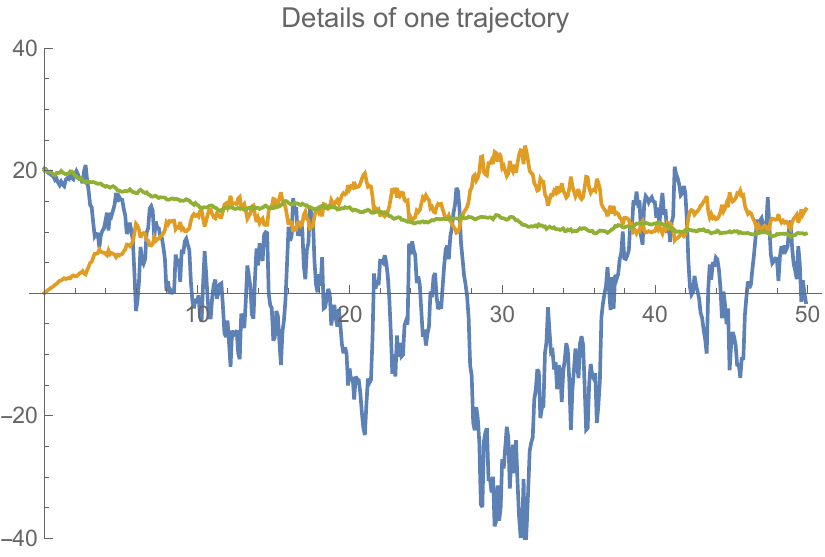}
		\caption{\small{evolution of $\hat\alpha$ (blue), $\hat X$ (orange) and $P$ (green)}.}
		\label{fig:onetrajectory}
	\end{minipage}
	\hfill
	\begin{minipage}{0.45\textwidth}
		\centering
		\includegraphics[width=\textwidth]{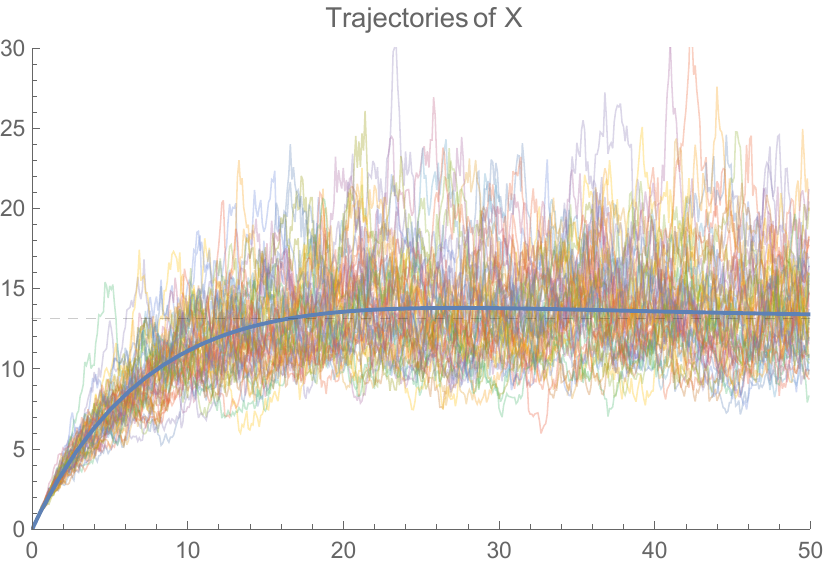}
		\caption{\small{convergence of the trajectories of $\hat X$ for $\eta$ $=$ $4$.}}
		\label{fig:conv}
	\end{minipage}
\end{figure}

\newpage

\begin{figure}[h!]
	\begin{minipage}{0.45\textwidth}
		\centering
		\includegraphics[width=\textwidth]{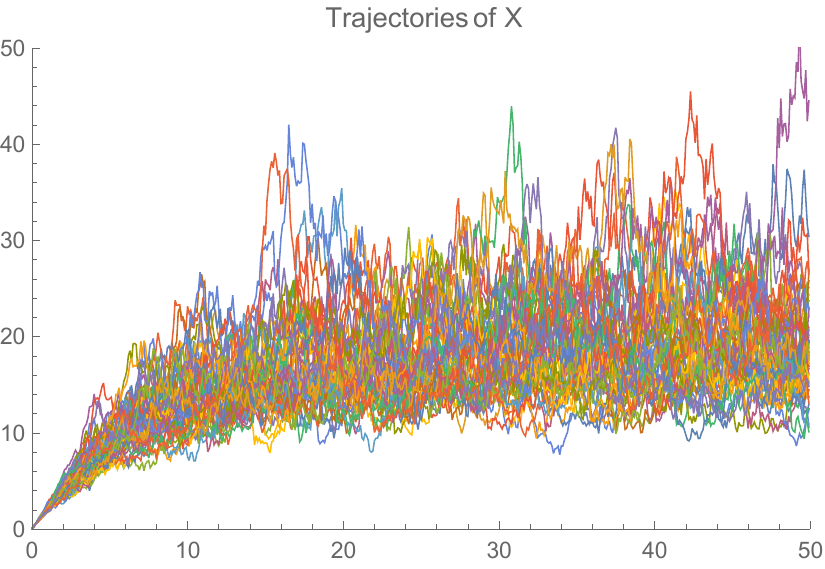}
		\caption{\small{trajectories of $\hat X$ for $\eta$ $=$ $2$.}}
		\label{fig:eta2}
	\end{minipage}
	\hfill
	\begin{minipage}{0.45\textwidth}
		\centering
		\includegraphics[width=\textwidth]{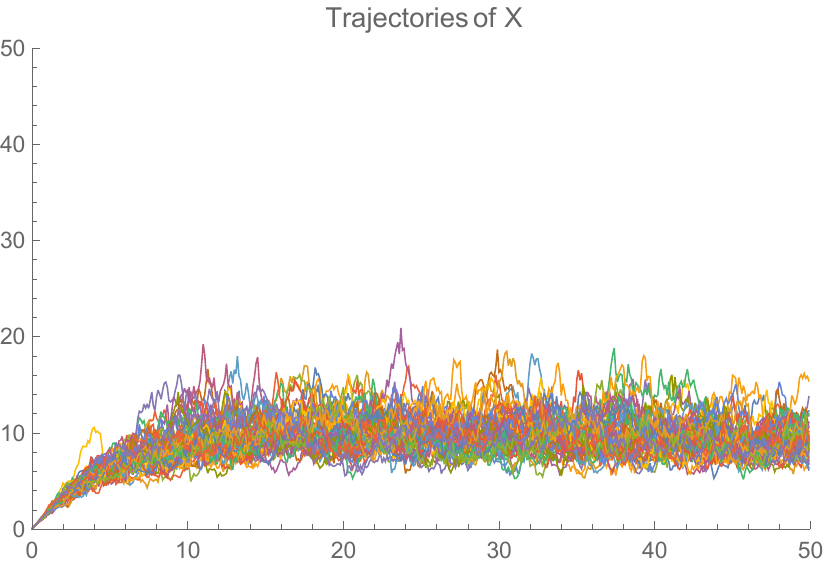}
		\caption{\small{trajectories of $\hat X$ for $\eta$ $=$ $8$.}}
		\label{fig:eta8}
	\end{minipage}
\end{figure}

\subsection{The firm's problem} \label{seccomp}

We start from the expression \reff{costenergy} of the expected total discounted cost of the firm and remove the terms that are not affected by the centralised production. Thus, given a distributed generation production $X^\alpha$, we wrtite the firm's problem as 
\beq \label{defVF}
V^F(X^\alpha) &=& \inf_{\nu \in \Vc} \E \Big[ \int_0^\infty e^{-\rho  t} \ell_t(Q^\nu_t, \nu_t) dt \Big],
\enq
where $\{\ell_t(q,u), (q,u) \in \R^2, t \geq 0\}$ is the $\F^0$-adapted random field defined by
\beq \label{deff2}
\ell_t(q,u) &=& h u^2 - 2\lambda \big( 1  - \frac{\pi_1}{2\lambda D} \big)(D - X_t^\alpha) q + \lambda q^2.
\enq
Here $\Vc$ is the set of $\F$-adapted real-valued processes $\nu$ s.t.~$\E[\int_0^\infty e^{-\rho t} |\nu_t|^2 dt]$ $<$ $\infty$, which implies that $\E[\int_0^\infty e^{-\rho t} |Q_t^\nu|^2 dt]$ $<$ $\infty$. Because $\E[\int_0^\infty e^{-\rho t}|X_t^\alpha|^2 dt]$ $<$ $\infty$ for $\alpha$ $\in$ $\Ac$, the problem \reff{defVF} is well-defined. 
Problem \reff{defVF} fits into the class of linear-quadratic stochastic control problems with random coefficients and on infinite horizon.  
We can solve the problem by following the same martingale optimality principle as  for the consumer's problem, although without the McKean-Vlasov dependence (see Remark \ref{remmar}). We provide a closed-form expression for the optimal centralised generation production of the firm, as well as its long-term behaviour. The proof is reported in Appendix \ref{AppA2}. 
\begin{Theorem} \label{theo2}
Given a distributed generation $X^\alpha$, the optimal rate $\hat\nu(X^\alpha)$ for centralised generation is given by
\begin{equation}
\label{FIRMoptcontr}
\hat\nu_t(X^\alpha) = - \frac{\tilde K}{h} \hat Q_t(X^\alpha) 
+ \frac{\lambda}{h} \big( 1  - \frac{\pi_1}{2\lambda D} \big) \int_t^\infty e^{-(\rho+\frac{\tilde K}{h})(s-t)} \E[D - X_s^\alpha|\Fc_t] ds, \quad t \geq 0, 
\end{equation}
with the optimal cumulative production $\hat Q(X^\alpha)$ given in mean by
\begin{equation}
\label{FIRMexpect}
\E[\hat Q_t(X^\alpha)] = q_0 e^{-\frac{\tilde K}{h} t} + \frac{\lambda}{h} \big( 1 - \frac{\pi_1}{2\lambda D} \big) \int_0^t e^{-\frac{\tilde K}{h}(t-s)} 
\int_s^\infty  e^{-(\rho+\frac{\tilde K}{h})(u-s)} \E[D - X_u^\alpha] du \, ds, \quad t \geq 0,  
\end{equation}
and where $\tilde K$ is the positive constant equal to
\beq
\label{FIRMtildeK}
\tilde K &=&  \frac{-\rho  + \sqrt{\rho^2 + \frac{4\lambda}{h}}}{\frac{2}{h}}. 
\enq
Moreover, when the distributed production process has a stationary level, that is~$\lim_{t\rightarrow\infty}\E[X^\h_t]$ $=$ $\bar X_\infty^\alpha$ for some 
$\bar X_\infty^\alpha$ $>$ $0$, then the optimal cumulative production of centralised generation also admits a stationary level:
\beq
\label{FIRMlimits}
\lim_{t\rightarrow\infty} \E[\hat Q_t(X^\alpha)] = \big( 1  - \frac{\pi_1}{2\lambda D} \big)(D- \bar X_\infty^\alpha) \quad\mbox{ and so }\quad 
\lim_{t\rightarrow\infty} \E[\hat \nu_t(X^\alpha)] \; = \;  0.
\enq
In particular, if $\lim_{t\rightarrow\infty}\E[P_t]$ $=$ $\bar P$ exists and we consider the optimal distributed generation $\hat X$ $=$ $X^{\hat\alpha}$, then the limit of the centralised  production $\hat Q$ $=$   $\hat Q(\hat X)$  reads as
\beq
\label{FIRMdefQinfty}
\lim_{t\rightarrow\infty} \E[\hat Q_t] = \big( 1  - \frac{\pi_1}{2\lambda D} \big)\big(D- \overline{\hat X_\infty }(\bar P)\big) \; =: \;   \overline{\hat Q_\infty}(\bar P),
\enq
where $\overline{\hat X_\infty }$ is defined in \eqref{CONSdefXinfty}.
\end{Theorem} 
  
\vspace{3mm}
\noindent {\bf Interpretation and comments.} In the long-term the firm invests the fraction $1 - \pi_1/(2\lambda D)$ of the residual demand of the consumer. Nevertheless, the ratio $\pi_1/(2\lambda D)$ is close to zero. Indeed, as pointed in Section \ref{sec:Model}, if we consider that centralised energy is produced by coal-fired plants (1 MWh will generate 1 tonne of carbon emission), then $\pi_1 \approx 100$~\euro/MWh. Moreover the demand is of the order of magnitude of 50000 MW for a country like France or a state like California. The parameter $\lambda$ is also significantly large to compel the firm to satisfy consumer demand. Thus, it results that $\pi_1/(2\lambda D)$ is very small compared to 1 and that the firm invests a capacity equal to consumer's residual demand.

\subsection{The social planner's problem} 
\label{secsocial}

The social planner's problem, which consists in minimising the expected total costs of the consumer and the firm, is written as
\beq \label{defVS}
V^S &=& \inf_{\alpha\in\Ac,\, \nu\in\Vc} \E \Big[ \int_0^\infty e^{-\rho t}  g_t(X_t^\alpha,\E[X_t^\alpha],Q_t^\nu,\alpha_t,\nu_t)  dt \Big],
\enq
where  $\{g_t(x,\bar x,q,a,u), (x,\bar x,q,a,u) \in \R^5, t \geq 0\}$ is the $\F^0$-adapted random field defined by
\beqs
g_t(x,\bar x,q,a,u) &=& ca+\gamma a^2  + h u^2  +  (\eta + \lambda - \frac{\pi_0}{D}) x^2  - \eta \bar x^2  + \lambda q^2 
+ (2\lambda - \frac{\pi_1}{D}) xq  \\
& & \qquad+ \;  (\pi_0 -  2 \lambda D - \th P_t) x + (\pi_1 - 2\lambda D) q.  
\enqs
Problem \reff{defVS} belongs to the class of  linear quadratic McKean-Vlasov control problems in dimension 2, and is also solved by the same martingale optimality principle as described in Section \ref{secconsu}. The next result, whose proof is reported again in Appendix \ref{AppA3}, gives the closed-form expression for the optimal distributed and centralised generation determined by the social planner. 
\begin{Theorem} \label{theosocial}
Let \eqref{rhosigma} and \eqref{integP} hold and assume that
\begin{equation}
\label{CondParamSocPl}
D > \max \Big\{\frac{\pz}{\lambda}; \frac{\po^2}{4\lambda(\po-\pz)}\Big\}.
\end{equation}
Then, the optimal distributed and centralised generation for the social planner are given by
\begin{align}
\label{SOCPLoptimal}
\alpha_t^* =& -\frac{b K^{11}}{\gamma} (X^*_t-\eee[X^*_t]) - \frac{b K^{12}}{\gamma} (Q^*_t-\eee[Q^*_t]) -\frac{b \Lambda^{11}}{\gamma}\eee[X^*_t] -\frac{b \Lambda^{12}}{\gamma}\eee[Q^*_t] - \frac{b}{2\gamma} \tilde Y^1_t, \nonumber
\\
\nu_t^* =&  -\frac{K^{12}}{h} (X^*_t-\eee[X^*_t]) - \frac{K^{22}}{h} (Q^*_t-\eee[Q^*_t]) -\frac{\Lambda^{12}}{h}\eee[X^*_t] -\frac{\Lambda^{22}}{h}\eee[Q^*_t] - \frac{1}{2h} \tilde Y^2_t, \;\;\; t \geq 0,
\end{align}
where $ \{ K^{ij} \}_{1\leq i,j \leq 2}, \{ \Lambda^{ij} \}_{1\leq i,j \leq 2}$ are the unique symmetric positive-definite solutions to the {stochastic algebraic Riccati equation} (SARE) systems:
\begin{equation}
\label{SOCPLdefKL}
\begin{aligned}
&\begin{cases}
\frac{b^2}{\gamma}(K^{11})^2 + \frac{1}{h}(K^{12})^2 +\rho K^{11} - \sigma^2 K^{11} - \lambda - \eta + \frac{\pi_0}{D}  = 0,
\\
\frac{b^2}{\gamma}K^{11}K^{12} + \frac{1}{h}K^{12}K^{22} +\rho K^{12} - \lambda + \frac{\pi_1}{2D} = 0,
\\
\frac{b^2}{\gamma}(K^{12})^2 + \frac{1}{h}(K^{22})^2 +\rho K^{22} -\lambda = 0,
\end{cases}
\\
&\begin{cases}
\frac{b^2}{\gamma}(\Lambda^{11})^2 + \frac{1}{h}(\Lambda^{12})^2 +\rho \Lambda^{11} - \sigma^2 K^{11} - \lambda + \frac{\pi_0}{D} = 0,
\\
\frac{b^2}{\gamma}\Lambda^{11}\Lambda^{12} + \frac{1}{h}\Lambda^{12}\Lambda^{22} +\rho \Lambda^{12}  - \lambda + \frac{\pi_1}{2D} = 0,
\\
\frac{b^2}{\gamma}(\Lambda^{12})^2 + \frac{1}{h}(\Lambda^{22})^2 +\rho \Lambda^{22} -\lambda = 0,
\end{cases}
\end{aligned}
\end{equation}
and where $(\tilde Y^1, \tilde Y^2)=:\tilde Y$ is a stochastic process defined by
\begin{equation*}
\tilde Y_t = \int_t^\infty e^{-\Xi_1(s-t)}  \eee[ T_s| \mathcal{F}^0_t] ds + \int_t^\infty \Big( e^{-\Xi_2(s-t)} - e^{-\Xi_1(s-t)} \Big) \eee[T_s] ds + \Xi_2^{-1}
\begin{pmatrix}
\frac{\rho c}{b} \\
0 \\
\end{pmatrix},
\end{equation*}
with $\Xi_1,\Xi_2$ constants defined by
\begin{equation*}
\Xi_1 =
\begin{pmatrix}
\frac{b^2K^{11}}{\gamma}+\rho & \frac{b^2K^{12}}{h} \\
\frac{b^2K^{12}}{\gamma} & \frac{b^2K^{22}}{h}+\rho \\
\end{pmatrix},
\qquad
\Xi_2 =
\begin{pmatrix}
\frac{b^2\Lambda^{11}}{\gamma}+\rho & \frac{b^2\Lambda^{12}}{h} \\
\frac{b^2\Lambda^{12}}{\gamma} & \frac{b^2\Lambda^{22}}{h}+\rho \\
\end{pmatrix}
\end{equation*}
and $T$ is a stochastic process in $\R^2$ defined by
\begin{equation*}
T_t=
\begin{pmatrix}
\pi_0 -2\lambda D - \th P_t \\
\pi_1 -2\lambda D \\
\end{pmatrix},
\;\;\; t \geq 0.
\end{equation*}
The associated cumulative productions $X^*$ and $Q^*$ are given in mean by
\begin{align}
\label{SOCPLproduction}
\eee[X^*_t] =& \Gamma^{11}_t x_0 + \Gamma^{12}_t q_0 + \int_0^t \Big( \frac{b^2}{2\gamma} \Gamma^{11}_{t-s} \eee[\tilde Y^1_s] +  \frac{1}{2h}\Gamma^{12}_{t-s} \eee[\tilde Y^2_s] \Big) ds, \\
\eee[Q^*_t] =&  \Gamma^{21}_t x_0 + \Gamma^{22}_t q_0 + \int_0^t \Big( \frac{b^2}{2\gamma} \Gamma^{21}_{t-s} \eee[\tilde Y^1_s] +  \frac{1}{2h}\Gamma^{22}_{t-s} \eee[\tilde Y^2_s] \Big) ds,
\nonumber
\end{align}
where $\Gamma_\tau= e^{-(\Xi'_2 - \rho \, \text{id}) \tau}$ for $\tau >0$ and $\Xi'_2$ is the transpose of $\Xi_2$. Moreover, when the price process has a stationary level, i.e.~$\lim_{t\rightarrow\infty}\E[P_t]$ $=$ $\bar P$ for some $\bar P$ $>$ $0$, then the 
optimal cumulative productions  also admit stationary levels:
\beq 
\lim_{t\rightarrow\infty} \E[X_t^*] &=& \frac{2\lambda D^2  \big( 2 \pi_1 - \pi_0 + \th \bar P- \frac{\rho c}{b} \big) - D\pi_1^2 }{4\lambda D\big( \pi_1 - \pi_0 +\sigma^2K^{11}D \big) - \pi_1^2} \; =: \; \overline{X_\infty^*}(\bar P), 
\label{SOCPLlimits} \\
\lim_{t\rightarrow\infty} \E[Q_t^*] &=& \Big(1-\frac{\pi_1}{2\lambda D}\Big) \big( D - \overline{X_\infty^*}(\bar P) \big) \; =: \; \overline{Q_\infty^*}(\bar P).  \nonumber
\enq
\end{Theorem}

\vspace{3mm}
\noindent {\bf Interpretation and comments.} We concentrate on the long-term investment decision of the social planner. First, the social planner invests in centralised generation with the same decision rule as the firm. The investment level is not necessarily the same as the firm since it depends on the level of distributed generation. But the arbitrage is  guided by the same rule. The social planner satisfies the residual demand of the consumer with centralised energy. Thus, the main point is the arbitrage made by the social planner regarding distributed generation. Second, the long-term investment in distributed generation depends on the demand $D$. The long-term {distributed generation capacity} is a linear function of the long-term energy price $\bar P$, but we notice that the set of possible asymptotic Pareto optima does not cover the interval $(0,D)$ when varying $\bar P$. If the carbon tax is high enough compared to the distributed energy cost, there is a minimal long-term distributed capacity which will be installed even if the centralised energy price is null. Further, the Pareto optimal long-run level of distributed generation decreases linearly with distributed energy per unit cost $c$. Thus, for any given target  level of distributed energy, there is a distributed energy cost under which subsidies are no longer required to achieve an asymptotic Pareto optimum. This result somehow tempers the above cited conclusion of Green and Léautier (2015) \cite{Green15}. Moreover, the carbon emission limit prices $\pi_0$ and $\pi_1$ have no effect on the system~(\ref{SOCPLdefKL}) which gives the constant $K^{11}$ to $K^{22}$ because they appear divided by $D$ which is large. Thus, it is possible to analyse the effect of the absence of carbon taxes $\pi_0$ and $\pi_1$ by simply setting them to zero in the formula of the long-term investment in distributed energy and get:
\beq
\label{XbarInf}
\overline{X^*_{\infty}} \; = \;  \frac{ \theta \bar P - \frac{\rho c}{b}}{2 \sigma^2 \check K^{11}},
\enq
where $\check K^{11}$ is the value of $K^{11}$ of system~(\ref{SOCPLdefKL}) when $\pi_0 = \pi_1 = 0$.  Further, the investment in distributed generation by the social planner has the same expression as the investment by the consumer. But, now the social planner does not compare the price of the centralised energy with the price of the distributed energy. The social planner compares the cost of the transmission infrastructure ($\theta \bar P$) to the price of the distributed energy. The constant $\check K^{11}$ does not depend on $\theta$. Thus, the social planner invests in distributed energy only if $\theta > \frac{\rho c}{b \bar P}$. The cost of infrastructure should exceed the ratio of prices between distributed and centralised generations. Thus, the social planner sees the distributed energy as a substitute for transmission infrastructure. If the former condition is satisfied, the long-term investment in distributed energy will depend on the relative costs of the two energy sources because they are taken into account in the constant $K^{11}$. Unfortunately, this constant does not allow a simple expression. Nevertheless, one can note that in the case where the social planner is faced with a pure technology arbitrage problem to satisfy consumer demand, that is, there is no transmission infrastructure costs, the planner will not invest in distributed energy, whatever the relative costs of the two technologies. This unexpected result is not a consequence of the McKean-Vlasov term of variance penalisation. It holds true even if $\eta$ equals zero. This behaviour is explained by the effect both technologies have on the demand satisfaction constraint. In the long run, random distributed capacity always results in a positive term $\E\big[ \int_0^\infty e^{-\rho t} (D - X_t)^2 dt \big]$, whereas dispatchable centralised technology can cancel it out.  

\vspace{3mm}
\noindent {\bf Numerical illustration.} With the same parameter values as in Section~\ref{ssec:ltsg}, when there is no carbon tax, a long-run price of centralised energy of $\bar P$ equal to $114$~\euro/MWh is required to justify an investment by the social planner in distributed capacity. This price has to be put into perspective with the price paid for energy by French consumers which is about 80~\euro/MWh or with the present electricity price in the wholesale market which is presently close to 50~\euro/MWh.

\section{Looking for an equilibrium price} \label{secequil} 
\label{sec:Equilibres}

\setcounter{equation}{0}
\setcounter{Assumption}{0} \setcounter{Theorem}{0}
\setcounter{Proposition}{0} \setcounter{Corollary}{0}
\setcounter{Lemma}{0} \setcounter{Definition}{0}
\setcounter{Remark}{0}

Our chief goal is to endogenise the electricity price $P$ through equilibrium considerations which result from the interactions between the players in the centralised and distributed generation markets. As seen in the previous  section, the social planner can achieve a large set of long-run situations between centralised and distributed generations by choosing a regulated price of centralised energy. Our purpose is to analyse what situations can be naturally obtained by self-interested players exposed to a price signal. We address two types of equilibrium: Pareto and Stackelberg.  

\subsection{Asymptotic Pareto efficiency}
We recall the notations from the previous sections: $\hat X$ $=$ $\hat X(P)$ is the optimal distributed generation production with control $\hat\alpha$ $=$ $\hat\alpha(P)$ for the consumer, given the price $P$ for the centralised energy of the firm, $\hat Q$ $=$ $\hat Q(\hat X(P))$ is the optimal centralised generation capacity with optimal control $\hat\nu$ $=$ $\hat\nu(\hat X(P))$ for the firm given the optimal distributed generation capacity $\hat X$ of the consumer, and finally $(X^*,Q^*)$ $=$ $(X^*(P),Q^*(P))$ is the optimal pair of distributed/centralised generation capacities with control $(\alpha^*,\nu^*)$ $=$ $(\alpha^*(P),\nu^*(P))$  which is determined by the social planner given the price $P$ received by the firm. Thus, a Pareto optimum is a price process $P^{Par}$ such that the optimal installed capacities for the consumer and firm  coincide with the ones of the social planner, in other words, such that: 
\beq
\hat X_t(P^{Par}) &=& X_t^*(P^{Par}), \;\;\; \forall t \geq 0, \label{paretoX} \\
\hat Q_t(\hat X_t(P^{Par})) &=&  Q_t^*(P^{Par}), \;\;\;  \forall t \geq 0.  \label{paretoQ}
\enq
Further, the second relation on the optimal centralised production is always satisfied for any price process $P$. Indeed, by definition of the social planner's problem, given a fixed $P$, we have:
\beqs
V^S \; = \; J(\alpha^*,\nu^*) \; = \;  \inf_{\alpha\in\Ac,\nu\in\Vc} J(\alpha,\nu) 
& \leq & J(\alpha^*, \hat \nu(X^{\alpha^*})),
\enqs
which implies that $J_2(\nu^*;X^{\alpha^*})$ $\leq$ $J_2(\hat\nu(X^{\alpha^*});X^{\alpha^*})$. Since 
$\hat\nu(X^{\alpha^*})$ is the unique optimal control  of $\inf_{\nu\in\Vc} J_2(\nu;X^{\alpha^*})$, this shows that $\nu^*$ $=$ $\hat\nu(X^{\alpha^*})$, i.e. 
\beqs
\hat Q_t(\hat X(P)) &=& Q_t^*(P),  \;\;\;  \forall t \geq 0.  
\enqs
The first relation \reff{paretoX} on the optimal distributed generation is not satisfied in general for a given price $P$ (this is due to the fact that in general 
${\rm arg}\min_{\alpha\in\Ac} J(\alpha,\nu)$ $\neq$  ${\rm arg}\min_{\alpha\in\Ac} J_1(\alpha)$), and the existence and determination of such a Pareto optimum price $P^{Par}$ ensuring this relation is a challenging issue that we are unfortunately not able to solve. The complex formulae for $\hat X(P)$ and $X^*(P)$ in terms of  the price process $P$ seem intractable for finding a Pareto optimum satisfying the relation \reff{paretoX} at any time $t$, and instead, we propose a weaker idea of Pareto efficiency on the asymptotic (stationary) level. 
\begin{Definition}
An asymptotic Pareto optimum price is a constant $\bar P^{Par}$ such that 
\beq \label{PARdef}
\overline{\hat X_\infty}(\bar P^{Par}) &=& \overline{X_\infty^*}(\bar P^{Par}). 
\enq
An asymptotic Pareto optimum price $\bar P^{Par}$  is said to be admissible if
\beq \label{PARadmiss}
 \bar P^{Par} > 0, \;\;\;\;\;  \overline{X_\infty^*}(\bar P^{Par}) \in (0,D), & \mbox{ and } &  \overline{Q_\infty^*}(\bar P^{Par}) \in (0,\infty). 
\enq
\end{Definition}
Compared to the classical Pareto condition \reff{paretoX}, in \reff{PARdef} we require the weaker condition  that the  consumer's optimal asymptotic {average} capacity coincides with the social planner's optimal asymptotic {average} capacity. The admissibility conditions in \reff{PARadmiss} formulate on one hand  the natural requirement that an asymptotic Pareto optimum price should be positive. On the other hand, we also require that the asymptotic {average} stationary level of consumer's installed capacity should not exceed her demand since otherwise, it would mean that the consumer stably sells energy to the firm. The last admissibility condition states that the firm installed capacity should not be negative on the long-term. 

We provide existence and characterisation results for an admissible asymptotic Pareto optimum price. 
\begin{Theorem} \label{theoPar}
Let \eqref{rhosigma}, \eqref{integP} and \eqref{CondParamSocPl} hold. A necessary and sufficient condition for the existence of an admissible asymptotic Pareto optimum price  is that
  \begin{gather}
  \label{PARiff}
  \begin{cases}
  2 \sigma^2 K D \Big( 2 \lambda D \big( 2\pi_1 - \pi_0 - \frac{\rho c}{b} \big) - \pi_1^2 \Big) + \frac{\rho c}{b} \Big(4 \lambda D \big(\pi_1-\pi_0 + \sigma^2 K^{11} D \big)- \pi_1^2\Big) >0, \\
  -\pi_0 + 2 \sigma^2 D (K^{11} - \frac{\th}{1+\th} K) + \frac{1}{1+\th} \frac{\rho c}{b} \; > \; 0, \\
  2 \lambda D \big( 2\pi_1 - \pi_0 - \frac{\rho c}{b} \big) - \pi_1^2 + 2\lambda D \frac{\th}{1+\th} \frac{\rho c}{b} \; > \;  0,\\
  \pi_1 < 2 \lambda D,
  \end{cases}
  \nonumber \\
  \text{or}
  \\
  \begin{cases}
  2 \sigma^2 K D \Big(2 \lambda D \big( 2\pi_1 - \pi_0 - \frac{\rho c}{b} \big) - \pi_1^2 \Big) + \frac{\rho c}{b} \Big(4 \lambda D \big(\pi_1-\pi_0 + \sigma^2 K^{11} D \big)- \pi_1^2\Big) < 0, \\
  -\pi_0 + 2 \sigma^2 D (K^{11} - \frac{\th}{1+\th} K) +  \frac{1}{1+\th} \frac{\rho c}{b} \; < \;  0, \\
  2 \lambda D \big( 2\pi_1 - \pi_0 - \frac{\rho c}{b} \big) - \pi_1^2  + 2\lambda D \frac{\th}{1+\th} \frac{\rho c}{b} \; < \;  0,\\
  \pi_1 < 2 \lambda D,
  \nonumber
  \end{cases}
  \end{gather} 
where $K$ and $K^{11}$ are the constants defined in \reff{CONSdefKL} and \reff{SOCPLdefKL}. In this case, the unique admissible asymptotic Pareto optimum price is given by
\begin{equation}
\label{PARequil}
\bar P^{Par} = \frac{1}{1+\th}\frac{ 2 \sigma^2 K D \Big(2 \lambda D \big( 2\pi_1 - \pi_0 - \frac{\rho c}{b} \big) - \pi_1^2 \Big) 
+ \frac{\rho c}{b} \Big( 4 \lambda D \big(\pi_1-\pi_0 + \sigma^2 K^{11} D \big)- \pi_1^2 \Big)}{ 4 \lambda D \big(\pi_1-\pi_0 + \sigma^2 K^{11} D \big)- \pi_1^2 
- 4 \frac{\th}{1+\th} \lambda \sigma^2 K D^2 }.
\end{equation}
The corresponding equilibrium production is: 
\begin{equation*}
\overline{\hat X_\infty}(\bar P^{Par}) = \overline{X_\infty^*}(\bar P^{Par}) = D \frac{ 2 \lambda D \big( 2\pi_1 - \pi_0 - \frac{\rho c}{b}  \big) - \pi_1^2 
+ 2\lambda D \frac{\th}{1+\th} \frac{\rho c}{b}}{ 4 \lambda D \big(\pi_1-\pi_0 + \sigma^2 K^{11} D \big)- \pi_1^2 
- 4 \frac{\th}{1+\th} \lambda \sigma^2 K D^2}.
\end{equation*}
\end{Theorem}
\noindent {\bf Proof.} In order to simplify the notations, we set
\beqs
\chi_1 \; := \;   2 \lambda D \big( 2\pi_1 - \pi_0 - \rho c / b  \big) - \pi_1^2,
& & 
\chi_2 \; := \;   4 \lambda D \big(\pi_1-\pi_0 + \sigma^2 K^{11} D \big)- \pi_1^2.
\enqs
We get \eqref{PARequil} by solving the linear equation in \eqref{PARdef} from the expressions of the stationary production level for the consumer and social planner in \reff{CONSdefXinfty} and \reff{SOCPLlimits}. We now verify when the admissibility conditions \eqref{PARadmiss} are satisfied. We have $\bar P^{\text{Par}} >0$ if and only if
\begin{equation*}
\begin{cases}
2 \sigma^2 K D \chi_1 + \frac{\rho c}{b} \chi_2 >0, \\
\chi_2 - 4\frac{\th}{1+\th}\lambda \sigma^2 K D^2 >0,
\end{cases}
\qquad
\text{or}
\qquad
\begin{cases}
2 \sigma^2 K D \chi_1 + \frac{\rho c}{b} \chi_2 < 0, \\
\chi_2 - 4\frac{\th}{1+\th}\lambda \sigma^2 K D^2 < 0.
\end{cases}
\end{equation*} 
Moreover, we have $\overline{X_\infty^*}(\bar P^{Par})$ $\in$ $(0,D)$ if and only if
\begin{equation*}
0 < \frac{ \chi_1 + 2 \lambda D \frac{\th}{1+\th}\frac{\rho c}{b}}{ \chi_2 - 4\frac{\th}{1+\th}\lambda \sigma^2 K D^2} < 1,
\end{equation*}
that is, 
\begin{equation*}
\begin{cases}
\chi_2 - 4\frac{\th}{1+\th}\lambda \sigma^2 K D^2 > \chi_1 + 2 \lambda D \frac{\th}{1+\th} \frac{\rho c}{b}, \\
\chi_1 + 2 \lambda D \frac{\th}{1+\th}\frac{\rho c}{b} > 0,
\end{cases}
\quad
\text{or}
\quad
\begin{cases}
\chi_2 - 4\frac{\th}{1+\th}\lambda \sigma^2 K D^2 < \chi_1 + 2 \lambda D \frac{\th}{1+\th} \frac{\rho c}{b},\\
\chi_1 + 2 \lambda D \frac{\th}{1+\th} \frac{\rho c}{b} < 0.
\end{cases}
\end{equation*} 
Further, we have $\overline{Q_\infty^*}(\bar P^{Par})$ $>$ $0$ if and only if $\pi_1 < 2 \lambda D$, and we then obtain \eqref{PARiff} by combining the three sets of conditions.
\ep
 
\vspace{3mm}
\noindent {\bf Interpretation and comments.} The conditions in Theorem \ref{theoPar} are verified under reasonably weak conditions for the parameters. Indeed, the first inequality in the first system in \eqref{PARiff} is clearly satisfied when $\chi_1>0$ (notice that $\chi_2>0$ by \eqref{CondParamSocPl} and $K^{11}>0$): hence, a sufficient condition for the existence of an asymptotic Pareto optimum price is:
\begin{equation*}
\begin{cases}
2 \lambda D \big( 2\po - \pz - \frac{\rho c}{b} \big) - \po^2 >0, \\
-\pz + 2 \sigma^2 D (K^{11} - \frac{\theta}{1+\theta} K) + \frac{1}{1+\theta} \frac{\rho c}{b} >0, \\
\po < 2 \lambda D,
\end{cases}
\end{equation*}
which is satisfied under mild and practically reasonable assumptions on the coefficients, namely $\po>0$ and $D$ is big enough. In the asymptotic Pareto optimum, the demand level $D$ is large compared to other parameters and thus, it suggests to take the limit 
$D \rightarrow \infty$:
 \begin{gather*}
 \lim_{D \to \infty} \bar P^{\text{Par}} (D) = \frac{1}{1+\th}\frac{K(2 \pi_1 - \pi_0)  + \frac{\rho c}{b} \big(\check K^{11} -K\big)}{\check K^{11} - \frac{\th}{1+\th} K},
 \\
 \lim_{D \to \infty}\Big(\overline{X_\infty^*}(\bar P^{\text{Par}})\Big)(D) = \frac{1}{2\sigma^2} \frac{2 \po - \pz - \frac{1}{1+\th} \frac{\rho c}{b}}{\check K^{11} - \frac{\th}{1+\th} K},
 \end{gather*}
where $\check K = \{\check K^{ij}\}_{1 \leq i,j \leq 2}$ is a symmetric definite positive matrix solution to
\begin{equation*}
\begin{cases}
\frac{b^2}{\gamma}(\check K^{11})^2 + \frac{1}{h}(\check K^{12})^2 +\rho \check K^{11} - \sigma^2 \check K^{11} - \lambda - \eta = 0,
\\
\frac{b^2}{\gamma}\check K^{11} \check K^{12} + \frac{1}{h} \check K^{12} \check K^{22} +\rho \check K^{12} - \lambda = 0,
\\
\frac{b^2}{\gamma}(\check K^{12})^2 + \frac{1}{h}(\check K^{22})^2 +\rho \check K^{22} -\lambda = 0,
\end{cases}
\end{equation*}
The price of the asymptotic Pareto optimum is positive as soon as the carbon tax $\pi_1$ is high enough. The higher the carbon tax the higher the Pareto price is. This finding holds true also for the distributed energy cost $\rho c/b$ because $\check K^{11} -K$ is positive in practical numerical cases.
 
\vspace{3mm} 
\noindent {\bf Social effect of carbon tax.}  In the absence of  a carbon tax, that is,~$\pi_0,\pi_1=0$, we have
\beqs 
\bar P^{Par} &=&  \frac{\rho c}{b(1+\th)} \frac{\check K^{11}-K}{\check K^{11}- \frac{\th}{1+\th} K}, \\
\overline{X_\infty^*}(\bar P^{Par}) &=&  -\frac{\rho c}{2b\sigma^2}\frac{1}{\check K^{11}- \frac{\th}{1+\th} K}.
\enqs
In particular, we see that the limiting distributed production is always negative, and thus the asymptotic Pareto optimum price is not admissible. Since  a classical (in the sense of \reff{paretoX}) Pareto optimum price is obviously an asymptotic Pareto equilibrium, this means that there  cannot exist admissible Pareto optimum price if  $\pi_0=\pi_1=0$. In other words, the presence of a carbon tax  is necessary to get admissible Pareto efficiency. The reason for this result comes from the way infrastructure costs impact differently the consumer and the social planner's decision to invest in distributed energy. The consumer invests as soon as the total energy cost of centralised energy $(1+\theta) \bar P$ is higher than the cost of distributed energy $\rho c/b$ while the social planner only needs $\theta \bar P$ to be higher than the cost of distributed energy. It results that the consumer has a lower investment threshold price than the social planner, making agreements possible only for negative values of distributed generation. Adding carbon tax to centralised energy allows to decrease the investment threshold price of the social planner. If the {carbon tax} is large enough, then the social planner investment threshold price can become lower than the investment threshold of the consumer, opening the door for an equilibrium.

\subsection{Asymptotic Stackelberg equilibrium}

We consider here a Stackelberg equilibrium where the firm is viewed as a {\it leader} that offers a price to the consumer (the {\it follower}). The firm knows the optimal distributed generation $\hat X$ $=$ $\hat X(P)$, and searches for an equilibrium price that minimises its total cost in \reff{costenergy}:  $J_2(\nu;\hat X(P))$ $=$ $J_2(\nu,P;\hat X(P))$ where we now stress the dependence of the cost  on the price. The Stackelberg problem for the firm is then formulated as:
\beq 
\inf_{(\nu,P)} \E \Big[ \int_0^\infty e^{-\rho t} \Big( h \nu_t^2  & - &   P_t\big(D - \hat X_t(P) \big)   \label{pbsta} \\
& &   + \;\;\;\;\;   \pi^{\hat \alpha,\nu}_t \big(D - \hat X_t(P) \big)  + \lambda\big(D - \hat X_t(P) - Q_t^\nu \big)^2 \Big)dt\Big], \nonumber
\enq
and if an optimal control $(\check \nu, \check P)$ to this problem exists,  we say that $\check P$ is a Stackelberg equilibrium price. Unfortunately, given the expression of $\hat X(P)$ that can be obtained from Theorem \ref{theoconsu},  the problem \reff{pbsta} seems intractable, so we define an alternative and more tractable concept of Stackelberg equilibrium. 
 
Consider a stationary situation, where the price is a constant $p$, the centralised installed capacity is a constant $q$, and the consumer's distributed capacity is a constant $x$. Then, the cost for the firm is equal to:
\beqs
C_f(p,x,q) &=& \big(-  p + \frac{\pi_0}{D} x + \frac{\pi_1}{D} q \big)(D-x) + \lambda (D-x-q)^2.
\enqs
Because the situation is static, no installation costs appear. From Proposition \ref{propconsulimit} and Theorem \ref{theo2}, given a stationary price $\bar P$, the optimal stationary installed distributed capacity for the consumer is $\overline{\hat X_\infty}(\bar P)$ and the optimal stationary production for the firm is $\overline{\hat Q_\infty}(\bar P)$, where
\beqs
\overline{\hat X_\infty}(\bar P) \;=\; \frac{(1+\th) \bar P - \frac{\rho c}{b}}{2\sigma^2 K}, & & \qquad \overline{\hat Q_\infty}(\bar P) = \big(1 - \frac{\pi_1}{2\lambda D}\big)\big(D- \overline{\hat X_\infty}(\bar P)\big).
\enqs
\begin{Definition}
An asymptotic Stackelberg equilibrium price is a constant $\bar P^{Sta}$ such that 
\beqs
\bar P^{Sta} & \in & {\arg\min}_{\bar P \in\R}  C_f\big(\bar P, \overline{\hat X_\infty}(\bar P) , \overline{\hat Q_\infty}(\bar P) \big). 
\enqs
An asymptotic Stackelberg equilibrium price $\bar P^{Sta}$ is said to be admissible if
\beqs
\bar P^{Sta} > 0, \;\;\;\;\;  \overline{\hat X_\infty}(\bar P^{Sta}) \in (0,D), & \mbox{ and } &  \overline{\hat Q_\infty}(\bar P^{Sta}) \in (0,\infty).  
\enqs
\end{Definition}

Notice that, unlike the Pareto case, a Stackelberg equilibrium is not necessarily an asymptotic Stackelberg equilibrium. We provide existence and characterisation results for an admissible asymptotic Stackelberg equilibrium price.  
 
 \begin{Theorem}
Let \eqref{rhosigma} and \eqref{integP} hold. A necessary and sufficient condition for the existence of an admissible asymptotic Stackelberg equilibrium price is that:
  \begin{gather} \label{condadmi}
   \begin{cases}
    \sigma^2KD \pi_0 + (2\sigma^2KD + \frac{\rho c}{b}) \Big[ \frac{1}{1+\th}\sigma^2KD + \pi_1 - \pi_0 - \frac{\pi_1^2}{4\lambda D}\Big] >0, \\
    \frac{1}{1+\th}(2\sigma^2KD - \frac{\rho c}{b}) + 2\pi_1 - \pi_0 - \frac{\pi_1^2}{2\lambda D} >0, \\
    \frac{1}{1+\th}(2\sigma^2KD + \frac{\rho c}{b}) - \pi_0 > 0,\\
    \pi_1 < 2 \lambda D,
   \end{cases}
  \end{gather} 
where we recall that $K$ is the constant defined in \reff{CONSdefKL}. In this case, the equilibrium is unique and given by
\beqs
\bar P^{Sta} &=&  \frac{\sigma^2KD \pi_0 + (2\sigma^2KD + \frac{\rho c}{b}) \Big[\frac{1}{1+\th}\sigma^2KD + \pi_1 - \pi_0 -\frac{\pi_1^2}{4\lambda D}\Big]}
{(1+\th)\Big[\frac{1}{1+\th}2\sigma^2KD + \pi_1 - \pi_0 - \frac{\pi_1^2}{4\lambda D}\Big]}.
\enqs
 \end{Theorem}
\noindent {\bf Proof.} Observing that there is a one-to-one correspondence between $\bar P$ $\in$ $\R$ and $x$ $=$ $\overline{\hat X_\infty}(\bar P)$ $\in$ $\R$, we see that 
\beqs
\min_{\bar P\in\R} C_f\big(\bar P, \overline{\hat X_\infty}(\bar P) , \overline{\hat Q_\infty}(\bar P) \big) &=& \min_{x\in R} C_f(\bar P(x),x,\bar Q(x)),
\enqs
where $\bar P(x)$ is the inverse function of $\bar P$ $\mapsto$ $\overline{\hat X_\infty}(\bar P)$ given by $\bar P(x)$ $=$ 
$\frac{1}{1+\th} \big(- 2\sigma^2K (D-x) + 2\sigma^2KD + \frac{\rho c}{b}\big)$, and 
$\bar Q(x)$ $=$ $\overline{\hat Q_\infty}(\bar P(x))$  $=$ $(1-\frac{\pi_1}{2\lambda D})(D-x)$.  Hence, we see that
\beqs
C_F(x) &:=& C_f(\bar P(x),x,\bar Q(x)) \\
&=& \big[ -\frac{1}{1+\th}(2\sigma^2KD + \frac{\rho c}{b}) + \pi_0 \big](D-x) \\
& & \;\;\; + \; \bigg[\frac{1}{1+\th}2\sigma^2K - \frac{\pi_0}{D} + \frac{\pi_1}{D} - \frac{\pi_1^2}{4\lambda D^2} \bigg] (D - x)^2,
\enqs
and  a minimum for $C_F$ exists if and only if 
\begin{equation*}
 \chi := \frac{1}{1+\th}2\sigma^2KD + \pi_1 - \pi_0 - \frac{\pi_1^2}{4\lambda D} > 0.
\end{equation*}
Under such an assumption, the minimum is reached  at $x^{Sta}$ as defined by
\beqs
D- x^{Sta}  &=&  \frac{D}{2} \frac{\frac{1}{1+\th}(2\sigma^2KD + \frac{\rho c}{b}) - \pi_0}{\frac{1}{1+\th}2\sigma^2KD + \pi_1 - \pi_0 - \frac{\pi_1^2}{4\lambda D}},
\enqs
which corresponds to an asymptotic Stackelberg price
\beqs
\bar P^{Sta} &=& \bar P(x^{Sta}) \\
&=& \frac{\sigma^2KD \pi_0 + (2\sigma^2KD + \frac{\rho c}{b}) \Big[\frac{1}{1+\th}\sigma^2KD + \pi_1 - \pi_0 
- \frac{\pi_1^2}{4\lambda D}\Big]}{(1+\th)\Big[\frac{1}{1+\th}2\sigma^2KD + \pi_1 - \pi_0 - \frac{\pi_1^2}{4\lambda D}\Big]}.
\enqs
  
We now investigate the admissibility of such an asymptotic Stackelberg equilibrium price. First, under the condition $\chi$ $>$ $0$, we notice that 
$\bar P^{Sta}$ $>$ $0$ if and only if 
\beqs \label{first}
\sigma^2KD \pi_0 + (2\sigma^2KD + \frac{\rho c}{b}) \Big[\frac{1}{1+\th}\sigma^2KD + \pi_1 - \pi_0 - \frac{\pi_1^2}{4\lambda D}\Big] &>& 0.
\enqs
Secondly, we find that $\overline{\hat X_\infty}(\bar P^{Sta})$ $=$ $x^{Sta}$ $\in$ $(0,D)$ if and only if $D-x^{Sta}$ $\in$ $(0,D)$, i.e.
\begin{equation*}
   0 < \frac{\frac{1}{1+\th}(2\sigma^2KD + \frac{\rho c}{b}) - \pi_0}{ 2\chi} < 1,
  \end{equation*}
  that is (recall that $\chi >0$)
  \begin{equation*} \label{second}
   \begin{cases}
    2\chi > \frac{1}{1+\th}(2\sigma^2KD + \frac{\rho c}{b}) - \pi_0, \\
    \frac{1}{1+\th}(2\sigma^2KD + \frac{\rho c}{b}) - \pi_0 > 0.
   \end{cases}
  \end{equation*} 
Further, $\overline{\hat Q_\infty}(\bar P^{Sta})$ $=$ $\bar Q(x^{Sta})$ $>$ $0$ if and only if $\pi_1 < 2 \lambda D$.  By combining all these conditions, we obtain the existence of an admissible asymptotic Stackelberg equilibrium if and only if:
 \begin{equation*}
  \begin{cases}
   \sigma^2KD \pi_0 + (2\sigma^2KD + \frac{\rho c}{b}) \Big[\frac{1}{1+\th}\sigma^2KD + \pi_1 - \pi_0 - \frac{\pi_1^2}{4\lambda D}\Big] > 0, \\
   2\chi > \frac{1}{1+\th}(2\sigma^2KD + \frac{\rho c}{b}) - \pi_0, \\
   \frac{1}{1+\th}(2\sigma^2KD + \frac{\rho c}{b}) - \pi_0 > 0, \\
   \pi_1 < 2 \lambda D.
  \end{cases}
 \end{equation*} 
\ep

\noindent {\bf Interpretation and comments.} The admissibility conditions \reff{condadmi} are verified under reasonably weak conditions for the parameters, in particular that the demand $D$ is large enough. Thus, if the demand goes to infinity, the asymptotic Stackelberg equilibrium price tends to go to infinity, contrary to the asymptotic Pareto optimum which admits a finite limit. Thus, {for $D$ sufficiently large,} the Stackelberg long-term price should exceed the Pareto optimum. We use a numerical example to illustrate the discrepancy between the situation where the two players try to agree in a leader/follower game and the situation where the social planner tries to find a price that will suit both interest. 

\vspace{3mm}\noindent {\bf Numerical illustration.} Using the same parameter values as in sections \ref{ssec:ltsg}
and \ref{secsocial} and considering that the carbon tax limits are $\pi_0 = 0$, $\pi_1 = 100$ \euro/MWh,  the residual demand satisfaction constraints penalty is $\lambda = 100 \times 8760$ \euro/MW$^2$/year, and a total demand of $D = 50$~GW, we find an asymptotic Pareto optimum price according to formula \eqref{PARequil} of 91~\euro/MWh and a long-term distributed capacity of 1.1~GW. On the other hand, the asymptotic Stackelberg equilibrium yields a long-term distributed capacity of 25.7~GW and a  long-term price of 424~\euro/MWh. The Stacklelberg equilibrium results in a significant over-investment in distributed generation compared to the Pareto optimum. This over-investment can be explained by the high price charged by the firm for its centralised energy. Indeed, in the Stackelberg situation, the firm knows that whatever the consumer's investment in distributed generation, distributed generation will not always be enough to satisfy consumer's demand because the uncertainty of her generation is proportional to the installed capacity of distributed generation. When there is no sun, there is no generation whatever the number of installed solar panels. Thus, the firm can charge the consumer a high price each time consumer's demand requires centralised generation. Knowing this risk, the consumer tries to reduce her residual demand by investing in a high quantity of distributed generation while taking into account her own aversion to its variance. Multiplying by 10 the variance penalty parameter results only in a small reduction of distribution generation to $25.2$~GW but to a large increase in the asymptotic Stackelberg price to 1115~\euro/MWh. The more the consumer is averse to variance, the more the firm can charge her a high price for its dispatchable centralised generation.

\vspace{3mm}

\noindent {\bf Social effect of carbon tax.} We conclude this section by discussing the particular case when there is no carbon tax, that is, $\pi_0,\pi_1=0$. In this case, 
\beqs
\bar P^\text{Sta} &=& \frac{1}{1+\th}\big(\sigma^2 K D + \frac{\rho c}{2b}\big),   \\
\overline{\hat X_\infty}(\bar P^\text{Sta})  & = & \frac{D}{2} - \frac{\rho c}{4 b \sigma^2 K}, \;\; \mbox{ i.e. } \;\; D - \overline{\hat X_\infty}(\bar P^\text{Sta}) \; = \; \frac{D}{2} + \frac{\rho c}{4 b \sigma^2 K},
\enqs
and the admissibility conditions reads as $2\sigma^2KD > \rho c/b$ and $\pi_1 < 2 \lambda D$. Further, we make some observations. 
\begin{itemize}
\item[-] Provided that $D$ is big enough, we have an admissible Stackelberg equilibrium (unlike the Pareto case, where we had no admissible equilibria in the case $\pi_0=\pi_1=0$). In particular, the price is always positive. 
\item[-] The equilibrium price is an affine function of the demand. Thus, for $D$ sufficiently large, the asymptotic Stackelberg price will be greater than the Pareto one. 
\item[-] The equilibrium production for the consumer is always smaller than $D/2$ whatever the installation cost $c$ of distributed energy.  The company has clearly an interest in limiting the amount produced by the consumer, but it is remarkable to have a simple and explicit upper bound: $D/2$.
\end{itemize}

\section{Extensions}
\label{sec:Extensions}
We now consider an extension of the model proposed for the consumer and the social planner. In the previous sections, the consumer's installation costs $C_1(\alpha_t)$ $=$ $c\h_t + \gamma \h_t^2$ only consider the current choice $\h_t$. Here, we consider a learning-by-doing effect on the distributed generation per unit cost. The cumulative installed distributed capacity, that is, $\int_0^t \h_s ds$,  drives the price of solar panels down, which is typically observed for renewable energy technologies (see \cite{Rubin15}). This suggests the following new definition for the installation costs: 
\[
\tilde C_1(\alpha) = c\h_t + \gamma \h_t^2 - \tilde \mu \h_t \eee\big[\textstyle\int_0^t \h_s ds \big],
\]
 for some $\tilde\mu$ $>$ $0$. We consider a linear impact form for the learning-by-doing effect instead of the exponential form for sake of tractability.
Moreover, to reduce the number of variables involved, we notice that $dX^\h_t = b \h_t dt + \sigma X^\h_t dW_t$ implies $\eee[\int_0^t \h_s ds]  = (\eee[X^\h_t] \!-\! x_0)/b$, so that we can rewrite $\tilde C_1(\alpha)$ as 
$c\h_t + \gamma \h_t^2 - \mu \h_t \eee[X^\h_t]$ (we have set $\mu = \tilde \mu /b$). 
\begin{Remark}
{\rm 
Another interpretation of the present approach is possible. Assume that $N$ consumers are present in the market. It is reasonable to assume that the installation costs for each consumer depend on the other players' behaviour: if the consumers are on average installing new panels, the costs are smaller. We assume this decrease is quadratic with respect to the mean of the produced electricity; in other words, the installation costs for consumer $i \in \{1,\dots,N\}$ are modelled by $\gamma (\h^i_t)^2 + c \h^i_t - \mu \h^i_t (\frac{1}{N} \sum_{j=1}^{N}X^{\h^j}_t)$. As $N \to \infty$ (aggregated consumers), the expression above is approximated by $\gamma \h^2_t + c \h_t - \mu \h_t \eee[X^\h_t]$, which is the model studied in this section.
} 
\ep
\end{Remark}

We are then led to consider the following generalisation of the consumer's problem:
 \begin{equation*}
 \inf_{\h} \eee \left[ \int_0^\infty e^{-\rho t}\bigg( c\h_t + \gamma \h_t^2 - \mu \h_t\eee[X^\h_t] + (1+\th)P_t \big(D - X^\h_t) + \eta \text{Var}[X^\h_t] \bigg)dt\right].
 \end{equation*}
This problem can be solved by the same technique as above. The results for the optimal control and production are as follows.
  
 \begin{Theorem} 
Let \eqref{rhosigma} and \eqref{integP} hold and assume that $\mu$ is small enough to satisfy 
 	\begin{equation}
 	\label{CondParamConsExt}
 	\sigma^2K - \frac{\rho\mu}{2b} >0,
 	\end{equation}
 	with $K$ as in Theorem \ref{theoconsu}, and set 
 	\begin{equation}
 	\label{mu1} 
 	\tilde \Lambda = \gamma \frac{-\rho + \sqrt{\rho^2 + \frac{4b^2}{\gamma}(\sigma^2K - \frac{\rho\mu}{2b})}}{2b^2}.  
 	\end{equation}
 The optimal solar panels installation rate $\hat\alpha_t^{ext}$ and the corresponding optimal cumulative production $\E[\hat X_t^{ext}]$ are given by the same formulas as in Theorem \ref{theoconsu}, with $\Lambda$ replaced by $\tilde \Lambda$.  In particular, if there exists $\bar P$ $>$ $0$ s.t.~$\lim_{t\rightarrow\infty}\E[P_t]$ $=$ $\bar P$, the optimal cumulative production of distributed generation admits a stationary level:
  \beq
  \label{}
  \lim_{t\rightarrow\infty} \E[\hat X_t^{ext}] \; = \;  \frac{ (1+\th) \bar P - \frac{\rho c}{b}}{2 \sigma^2 K - \frac{\rho \mu}{b}} \; =: \; \overline{\hat X_\infty^{ext}}(\bar P),  
  & \mbox{ and so } & 
  \lim_{t\rightarrow\infty} \E[\hat \alpha_t^{ext}] \; = \;  0.
  \enq
 \end{Theorem}
 
\vspace{3mm} 

\noindent{\bf Interpretation and comments.} The learning-by-doing parameter increases the value of the distributed generation limit, which is reasonable: the installation costs are smaller, so that the limit production is bigger. However, we find that the effect of $\mu$ on the asymptotic {average} distributed capacity is quite small. In our context, a crude but reasonable way of determining a value of $\mu$ is to consider that if all the demand is satisfied with distributed generation, then  the initial cost $c$ would be divided by a factor four. Using the same data as the numerical illustration in Section~\ref{ssec:ltsg}, this reasoning would result in a value for $\mu$ $=$ $ 3/4 \times c/D\approx 45$ which is negligible before $2 \sigma^2 K \approx 1.1 \, 10^3$.

\vspace{5mm}

The social planner's problem now reads:
\begin{multline}
\inf_{(\h,\nu)} \E \Big[ \int_0^\infty e^{-\rho t} \Big( c\h_t + \gamma \h_t^2 - \mu \h_t\eee[X^\h_t] + h \nu_t^2 
\\
+ (\th P_t + \pi_t^{\alpha,\nu}) (D- X_t^\alpha) + \eta {\rm Var}[X_t^\alpha] +  \lambda\big(D - X^\alpha_t-Q_t^\nu\big)^2  \Big) \Big].  
\end{multline} 
The optimal controls are as follows.
 
\begin{Theorem} 
Let \eqref{rhosigma} and \eqref{integP} hold and assume that
\begin{equation}
\label{CondParamSocPlExt}
D > \max \Big\{\frac{\pz}{\lambda}; \frac{\po^2}{4\lambda(\po-\pz)}\Big\}
\qquad\text{and}\qquad
\sigma^2 K^{11} - \frac{\rho \mu}{2 b} >0,
\end{equation}
with $K^{11}$ as in Theorem \ref{theosocial}. Let $\{\tilde\Lambda^{ij} \}_{1\leq i,j \leq 2}$ be the unique symmetric positive-definite solution to the stochastic algebraic Riccati (SARE) system
\begin{equation}
\label{mu2}  
\begin{cases}
\frac{b^2}{\gamma}(\tilde\Lambda^{11})^2 + \frac{1}{h}(\tilde\Lambda^{12})^2 +\rho \tilde\Lambda^{11} - \sigma^2 K^{11} - \lambda + \frac{\pi_0}{D} + \frac{\rho\mu}{2b}= 0,
\\
\frac{b^2}{\gamma}\tilde\Lambda^{11}\tilde\Lambda^{12} + \frac{1}{h}\tilde\Lambda^{12}\tilde\Lambda^{22} +\rho \tilde\Lambda^{12} - \lambda + \frac{\pi_1}{2D} = 0,
\\
\frac{b^2}{\gamma}(\tilde\Lambda^{12})^2 + \frac{1}{h}(\tilde\Lambda^{22})^2 +\rho \tilde\Lambda^{22} -\lambda = 0.
\end{cases}
\end{equation}
The optimal installation rates $\alpha_t^{*,ext}, u_t^{*,ext}$ and the corresponding optimal cumulative production $\E[X_t^{*,ext}], \E[Q_t^{*,ext}]$ are given by the same formulae as in Theorem \ref{theosocial}, with the matrix $\Lambda$ replaced by $\tilde \Lambda$. In particular, if there exists $\bar P$ $>$ $0$ s.t.~$\lim_{t\rightarrow\infty}\E[P_t]$ $=$ $\bar P$, the optimal cumulative production of distributed generation admits a stationary level:
  \beq 
  \lim_{t\rightarrow\infty} \E[X_t^{*,ext}] &=& \frac{2 \lambda D^2 \big( 2 \po - \pz + \th \bar P- \frac{\rho c}{b} \big) - D\po^2 }{4 \lambda D \big( \po - \pz +\sigma^2K^{11}D - \frac{\rho\mu D}{2b} \big) - \po^2} \; =: \; \overline{X_\infty^{*,ext}}(\bar P), 
  \\
  \lim_{t\rightarrow\infty} \E[Q_t^{*,ext}] &=& \Big(1-\frac{\pi_1}{2\lambda D}\Big) \big( D -  \overline{X_\infty^{*,ext}}(\bar P) \big) \; =: \; \overline{Q_\infty^{*,ext}}(\bar P).  \nonumber
  \enq
 \end{Theorem}
 
\vspace{5mm}

\noindent{\bf Interpretation and comments.} The impact of the learning-by doing effect appears in the asymptotic {average} distributed capacity with the correcting term $\rho \mu D/(2 b)$. With the parameter value considered for $\mu$, this term has a small impact on the asymptotic Pareto optimum distributed capacity.
  
\vspace{5mm}

The definition of asymptotic Pareto optimum prices is the same; the characterisation here reads as follows.

\vspace{2mm} 
 
 \begin{Theorem}
  Let \eqref{rhosigma}, \eqref{integP}, \eqref{CondParamConsExt} and \eqref{CondParamSocPlExt} hold. A necessary and sufficient condition for the existence of an admissible asymptotic Pareto optimum price  is that
  \begin{gather}
  \begin{cases}
  D \big(2\sigma^2 K - \frac{\rho \mu}{b}\big) \Big(  2\lambda D \big( 2 \po - \pz - \frac{\rho c}{b} \big) - \po^2  \Big) 
  \\ \hspace{3cm} +  \frac{\rho c}{b} \Big( 4 \lambda D \big( \po - \pz +\sigma^2K^{11}D - \frac{\rho\mu D}{2b} \big) - \po^2 \Big) >0, \\
  -\pz + 2 \sigma^2 D (K^{11} - \frac{\th}{1+\th} K ) + \frac{1}{1+\th} \frac{\rho c}{b} + \frac{\th}{1+\th} \frac{\rho \mu D}{b}>0, \\
  2\lambda D \big( 2 \po - \pz - \frac{\rho c}{b} \big) - \po^2 + 4 \lambda D \frac{\th}{1+\th} \frac{\rho c}{b} > 0,\\
  \pi_1 < 2 \lambda D,
  \end{cases}
  \nonumber \\
  \text{or}
  \\
  \begin{cases}
  D \big(2\sigma^2 K - \frac{\rho \mu}{b}\big) \Big(  2\lambda D \big( 2 \po - \pz - \frac{\rho c}{b} \big) - \po^2  \Big) 
  \\ \hspace{3cm} +  \frac{\rho c}{b} \Big( 4 \lambda D \big( \po - \pz +\sigma^2K^{11}D - \frac{\rho\mu D}{2b} \big) - \po^2 \Big) < 0, \\
  -\pz + 2 \sigma^2 D (K^{11} - \frac{\th}{1+\th} K ) + \frac{1}{1+\th} \frac{\rho c}{b} + \frac{\th}{1+\th} \frac{\rho \mu D}{b} < 0, \\
  2\lambda D \big( 2 \po - \pz - \frac{\rho c}{b} \big) - \po^2 + 4 \lambda D \frac{\th}{1+\th} \frac{\rho c}{b} < 0,\\
  \pi_1 < 2 \lambda D,
  \nonumber
  \end{cases}
  \end{gather} 
  where $K$ and $K^{11}$ are the constants defined in \reff{CONSdefKL} and \reff{SOCPLdefKL}. In this case, the unique admissible asymptotic Pareto optimum price is given by
  \begin{equation}
  \bar P^{Par} = \frac{1}{1+\theta}\frac{ D \big(2\sigma^2 K - \frac{\rho \mu}{b}\big) \Big(  2 \lambda D \big( 2 \po - \pz - \frac{\rho c}{b} \big) - \po^2 \Big) +  \frac{\rho c}{b} \Big( 4 \lambda D \big( \po - \pz +\sigma^2K^{11}D - \frac{\rho\mu D}{2b} \big) - \po^2 \Big) }{ 4 \lambda D \big( \po - \pz +\sigma^2K^{11}D - \frac{\rho\mu D}{2b} \big) - \po^2 - 2\lambda D^2 \frac{\th}{1+\th} \big(2 \sigma^2 K - \frac{\rho \mu}{b} \big) }.
  \end{equation}
  The corresponding equilibrium production is 
  \begin{equation*}
  \overline{\hat X_\infty}(\bar P^{Par}) = D \frac{ 2 \lambda D \big( 2 \po - \pz - \frac{\rho c}{b} \big) - \po^2 + 2 \lambda D \frac{\th}{1+\th} \frac{\rho c}{b}}{ 4 \lambda D \big( \po - \pz +\sigma^2K^{11}D - \frac{\rho\mu D}{2b} \big) - \po^2 - 2\lambda D^2\frac{\th}{1+\th} \big(2 \sigma^2 K - \frac{\rho \mu}{b} \big) }.
  \end{equation*}
 \end{Theorem}
 
\vspace{3mm}
\noindent{\bf Interpretation and comments.} Notice that the asymptotic Pareto price is barely impacted by the learning-by-doing parameter $\mu$.  
Further, the results do not change for the asymptotic Stackelberg equilibrium, as we have only changed the installation costs, which do not play any role in the definition of the Stackelberg equilibrium.

\section{Conclusions} \label{sec:conclu}
In this paper, we study  the interaction between electricity producers and consumers induced by the development of affordable distributed energy sources. Our model yet simple uses recent advances in the resolution of McKean-Vlasov optimal control problems with random coefficients. We show that in the long-term, the {\em laissez-faire} strategy leads to an over-investment in distributed energy sources compared to the socially desired outcome and to a higher price for the centralised energy. This effect can be understood as the possibility for power producers to charge at high prices the insurance they offer to the consumers for their dispatchable generation. Moreover, we show the crucial role of a significant carbon tax to justify the investment in distributed energy for the social planner. 
Although robust with respect to the energy price modelling, these results can be questioned regarding the reduction in the interaction between producers and consumers with distributed generation to an interaction between only two players. Indeed, competition between producers together with the possibility of energy exchanges between consumers might reduce the market power of electricity producers and help achieve a more socially and balanced equilibrium between them. These two points are left for further research.

\setcounter{equation}{0}
\setcounter{Assumption}{0} \setcounter{Theorem}{0}
\setcounter{Proposition}{0} \setcounter{Corollary}{0}
\setcounter{Lemma}{0} \setcounter{Definition}{0}
\setcounter{Remark}{0}

\appendix

\section{Appendix}

\setcounter{equation}{0} \setcounter{Assumption}{0}
\setcounter{Theorem}{0} \setcounter{Proposition}{0}
\setcounter{Corollary}{0} \setcounter{Lemma}{0}
\setcounter{Definition}{0} \setcounter{Remark}{0}
 
 \subsection{Proof of Theorem \ref{theoconsu}}
 \label{AppA1}
Given a candidate in the form of \eqref{CONScandidate}, our goal is to set the coefficients $K,\Lambda,Y,R$ so as to satisfy the condition in \eqref{conDc}.
By applying the It\^o formula and completing the square with respect to the control, we get \eqref{CONSito}, where the coefficients are defined by
\beqs
F(k, \dot k) &=& \dot k - \frac{b^2k^2}{\gamma} + (\sigma^2 - \rho)k + \eta, \\
G(k,\lambda, \dot \lambda) &=& \dot \lambda -\frac{b^2 \lambda^2}{\gamma} - \rho \lambda +\sigma^2k, \\
H_t(k, \lambda, y, \bar y, \dot y) &=& \dot y - \rho y  - \frac{b^2 k}{\gamma} (y - \bar y) - \frac{b \lambda}{\gamma} (c+ b \bar y) - (1+\th) P_t, \\
M(y, r, \dot r) &=& \dot r - \rho r  - \frac{1}{4\gamma} (c+ b y)^2, \\
A(x, \bar x, k , \lambda, y) &=& - \frac{b k}{\gamma}(x - \bar x) - \frac{b \lambda}{\gamma} \bar x - \frac{b}{2\gamma} y - \frac{c}{2\gamma}.
\enqs
Condition \eqref{CONSzerocoeff} then leads to the following equations for the coefficients:
\beqs
dK_t &=& \Big( \frac{b^2}{\gamma}K_t^2 + (\rho - \sigma^2) K_t -\eta \Big) dt,  \\
d\Lambda_t &=& \Big(\frac{b^2}{\gamma} \Lambda_t^2 +\rho \Lambda_t  - \sigma^2 K_t \Big) dt, \\
dY_t &=&  \Big( \rho Y_t + \frac{b^2 K_t}{\gamma} (Y_t - \eee[Y_t]) + \frac{b \Lambda_t}{\gamma} (c+ b \eee[Y_t]) + (1+\th) P_t \Big) dt + Z^Y_t dW^0_t, \\
dR_t &=& \Big( \rho R_t  + \frac{1}{4\gamma} (c+ b Y_t)^2 \Big) dt + Z^R_t dW^0_t.
\enqs
The first and second equations admit constant strictly positive solutions $K_t \equiv K$ and $\Lambda_t \equiv \Lambda$, with $K,\Lambda$ as in \eqref{CONSdefKL}. The third and the fourth equations are linear Backward Stochastic Diffe\-rential Equations (BSDEs), with the immediate solution 
(for the coefficient $Y_t$, first compute $Y_t - \eee[Y_t]$) given by:
\beq
Y_t &=& - (1+\th) \int_t^\infty e^{-(\rho + b^2K/\gamma)(s-t)}  \eee[ P_s| \mathcal{F}^0_t] ds \nonumber \\
& & \;\;\;  - \; (1+\th) \int_t^\infty \Big( e^{-(\rho + b^2\Lambda/\gamma)(s-t)} - e^{-(\rho + b^2K/\gamma)(s-t)} \Big) \eee[P_s] ds - \frac{bc\Lambda}{\rho\gamma + b^2\Lambda}, \;\;\;\;\;\;
\label{CONSdefYR} \\
R_t &=& - \frac{1}{4\gamma} \int_t^\infty e^{-\rho (s-t)} \eee[(bY_s + c)^2| \mathcal{F}^0_t] ds. \nonumber 
\enq
Furthermore, $Y_t$ is well-defined by the Holder inequality and \eqref{integP}, whereas \eqref{CONScond(i)} will imply that $R_t<\infty$. We get \eqref{CONScontropt} by plugging \eqref{CONSdefYR} into \eqref{CONSimplicitcontr}, while \eqref{CONSmeanprod} immediately follows by \eqref{dynX} and \eqref{CONScontropt}. By the procedure above, conditions (ii) and (iii) in Lemma \ref{lemverif} are verified (we will later prove that $\hat\alpha$ actually lies in $\Ac$). As for condition (i), we have to prove that 
$$
\lim_{t \to \infty} \E \big[ e^{-\rho t} (|Y_t|^2 + |R_t|)\big] \to 0.
$$
It immediately follows from the definition that $\E [ e^{-\rho t} |R_t|] \to 0$ as $t \to \infty$; as for the process $Y$, we prove the stronger condition
\beq
\label{CONScond(i)}
\E \Big[ \int_0^\infty e^{-\rho t} |Y_t|^2 dt  \Big] &<&  \infty.
\enq
By Jensen's inequality, Fubini's theorem, the law of iterated conditional expectations, and \eqref{integP}, we have: 
\beqs
& & \E \Big[  \int_0^\infty e^{-\rho t} \left( \int_t^\infty e^{-(\rho + b^2 K / \gamma)(s-t)} \E[P_s|\Fc_t^0] ds \right)^2 dt \Big] \\
& \leq &  \frac{1}{\rho + b^2 K / \gamma} \int_0^\infty \int_t^\infty e^{-\rho t} e^{-(\rho + b^2 K / \gamma)(s-t)}  \E[|P_s|^2] ds dt  \\
& \leq &  \frac{1}{(\rho + b^2 K / \gamma)b^2 K / \gamma} \int_0^\infty e^{-\rho s} \E[|P_s|^2] ds \; < \;  \infty.
\enqs
We deal with the other terms in $Y_t$ by the same arguments and obtain the required integrability condition \eqref{CONScond(i)}. Let us finally show  that $\hat\alpha \in \Ac$, i.e.~$\int_0^\infty e^{-\rho t}\E[|\hat\alpha_t|^2] dt$ $<$ $\infty$. Recall that $\hat\alpha$ is written as
\beqs \label{CONSdefG}
\hat\alpha_t & = &  - \frac{b K}{\gamma}\hat X_t + U_t,
\enqs
with
\beqs
U_t &:=& \frac{b (K - \Lambda)}{\gamma} \eee[\hat X_t] - \frac{b}{2\gamma} Y_t - \frac{c}{2\gamma}.
\enqs
By It\^o's formula to \reff{dynX} for $\alpha$ $=$ $\hat\alpha$, Young's inequality, and Gronwall's lemma, we have: 
\beqs
\E[ e^{-\rho t} |\hat X_t|^2 ] & \leq & \Big(  x_0^2 + \frac{b}{\epsilon} \int_0^t e^{-\rho s} \E[|U_s|^2] ds  \Big) e^{-(\rho - \sigma^2 - b\epsilon)t}, \;\;\;  t \geq 0, 
\enqs
for all $\epsilon$ $>$ $0$. For $\epsilon$ small enough, and under \reff{rhosigma}, 
we have $\rho - \sigma^2 - b\epsilon$ $>$ $0$, and thus it suffices to show that 
$\int_0^\infty e^{-\rho t} \E[|U_t|^2] dt$ $<$ $\infty$ to ensure the square-integrability condition:  $\int_0^\infty e^{-\rho t} \E[|\hat X_t|^2] dt$ $<$ $\infty$, and so 
$\hat\alpha$ $\in$ $\Ac$.  From the integrability condition  \eqref{CONScond(i)} on $Y$, we then have to check that  $\int_0^\infty e^{-\rho t}
\big(\E[\hat X_t]\big)^2 dt$ $<$ $\infty$, hence 
by  \eqref{CONSmeanprod} that (we set $B$ $=$ $b^2\Lambda /\gamma $)
\beqs
I &:=&  \int_0^\infty e^{-\rho t} \bigg( e^{-Bt} \int_0^t e^{Bs} \int_s^\infty e^{-(\rho + B)(u-s)} \E[P_u] du \, ds \bigg)^2 dt \; < \;  \infty.
\enqs 
By Fubini-Tonelli we can rewrite $I$ as: 
\beqs
I &=&  \frac{1}{(\rho+2B)^2} \int_0^\infty e^{-(\rho + 2B) t} \bigg(\int_0^\infty  \big(e^{(\rho + 2B) \min\{t,u\}}-1\big) e^{-(\rho + B)u} \E[P_u] du \bigg)^2 dt;
\enqs
by writing $e^{-(\rho + B)u}$ $=$ $e^{-2\epsilon u}e^{-(\rho + B -2\epsilon)u}$ for $\epsilon>0$, by Jensen's inequality w.r.t.~the measure $e^{-2\epsilon u} du$ and by Fubini-Tonelli, we get:
\beqs
I &\leq& \frac{1}{\epsilon(\rho+2B)^2} \int_0^\infty \left( \int_0^\infty e^{-(\rho + 2B)(t+u-2\min\{t,u\})} dt \right) e^{-(\rho-2\epsilon)u}\eee[P_u^2] du,
\enqs
which finally leads to
\begin{equation*}
I \leq \frac{2}{\epsilon(\rho + 2B)^3} \int_0^\infty e^{-(\rho- 2\epsilon)u}\eee[P_u^2] du.
\end{equation*}
Hence, from \eqref{integP} and for $\epsilon$ small enough we have $I$ $<$ $\infty$ and then $\hat\alpha$ $\in$ $\Ac$.
\ep

 \subsection{Proof of Theorem \ref{theo2}}  
 \label{AppA2}
 
 A suitable adaptation of Lemma \ref{lemverif} holds. We look for a candidate in the form of: 
 \beq
 \label{FIRMcandidate}
 v_t(q) &=& K_t q^2 + Y_t q + R_t, 
 \enq
 where the dynamics of the coefficients $K,Y,R$ are given by
 \beqs
 dK_t \; = \; \dot K_t dt, \qquad
 dY_t \; = \; \dot Y_t dt + Z_t^Y dW_t, \qquad
 dR_t \; = \; \dot R_t dt + Z_t^R dW_t,
 \enqs
 for some deterministic process $\dot K$ and $\F^0$-adapted processes $\dot Y$, $\dot R$, $Z^Y$, $Z^R$. As in Appendix \ref{AppA1}, we have assumed the quadratic coefficient to be deterministic. Moreover, since the randomness comes from the $\F$-adapted process $X^\h$, the stochastic part in $Y,R$ only depends on $W$. 
 
 By applying It\^o's formula to $S_t^\nu$ $=$ $e^{-\rho t} v_t(Q_t^\nu)$ $+$ $\int_0^t e^{-\rho s} \ell_s(Q^\nu_s,\nu_s)ds$ and completing the square, we get $d\E[S_t^\nu] = e^{-\rho t} \E[ \Dc_t^\nu ] dt$, with
 \beqs
 \E[\mathcal{D}^\nu_t] &=& \E\Big[ h \big(\nu_t - A(Q^\nu_y, K_t, Y_t) \big)^2+ F(K_t, \dot K_t) (Q^\nu_t)^2 
 + H_t(K_t,Y_t,\dot Y_t) Q^\nu_t + M(Y_t, R_t, \dot R_t) \Big],
 \enqs
 where the coefficients are defined by
 \beqs
 F(k, \dot k) &=& \dot k - \frac{k^2}{h} - \rho k + \lambda,  \\
 H_t(k, y, \dot y) &=& \dot y - \big( \rho + \frac{k}{h} \big) y - (2\lambda-\pi_1/D)\big(D - X^\h_t\big),  \\
 M(y, r, \dot r) &=& \dot r - \rho r  - \frac{y^2}{4h}, \\
 A(q, k, y) &=&  - \frac{k}{h} q - \frac{y}{2h}.
 \enqs
 Setting $\eee[\mathcal{D}^\nu_t] \geq 0$ for each $\nu$ and $\eee[\mathcal{D}^\nu_t] = 0$ for $\nu = \hat \nu$ provides the optimal control
 \beq  \label{FIRMimplicit}
 \hat \nu_t &=& A(Q^{\hat\nu}_t, K_t, Y_t)
 \enq
 and the following equations for the coefficients:
 \beqs
 dK_t &=& \bigg( \frac{K_t^2}{h} + \rho K_t - \lambda \bigg) dt, \\
 dY_t &=& \bigg( \Big(\rho + \frac{K_t}{h}\Big) Y_t + 2\lambda \Big(1-\frac{\pi_1}{2\lambda D}\Big)\big(D - X^\h_t\big) \bigg) dt + Z^Y_t dW_t, \\
 dR_t &=& \bigg( \rho R_t + \frac{1}{4h}Y^2_t \bigg) dt + Z^R_t dW_t.
 \enqs
 The first equation admits a constant solution $K_t \equiv \tilde K$, with $\tilde K$ as in \eqref{FIRMtildeK}, while the second and third equations are linear BSDEs, with immediate solutions:
 \begin{equation*}
 \label{FIRMdefYR}
 \begin{aligned} 
 &Y_t = - 2\lambda\Big(1-\frac{\pi_1}{2\lambda D}\Big) \int_t^\infty e^{-(\rho + \tilde K/h)(s-t)}  \eee[ D - X^\h_s| \mathcal{F}_t] ds, \\
 &R_t = - \frac{1}{4h} \int_t^\infty e^{-\rho (s-t)} \eee[Y_s^2| \mathcal{F}_t] ds.
 \end{aligned}
 \end{equation*}
 We get \eqref{FIRMoptcontr} by \eqref{FIRMimplicit} and \eqref{FIRMdefYR}, whereas \eqref{FIRMexpect} immediately follows by \eqref{FIRMoptcontr} and \eqref{dynQ}. Moreover, the conditions (i) and (iii) in Lemma \ref{lemverif} are verified by standard estimates as in Appendix \ref{AppA1}. Finally, we get the limit result by the same computations as the ones in Proposition \ref{prop:limitscons}.
\ep

 \subsection{Proof of Theorem \ref{theosocial}}  
 \label{AppA3}
 
  A suitable adaptation of Lemma \ref{lemverif} holds.   We use vector notations and a small change of variable by setting:
  \begin{gather*}
  \delta_t = 
  \begin{pmatrix}
  b \h_t \\ \nu_t
  \end{pmatrix},
  \qquad
  Z^\delta_t = 
  \begin{pmatrix}
  X^\h_t \\ Q^\nu_t
  \end{pmatrix},
  \end{gather*}
so that the dynamics of $Z^\delta$ are written as 
  \begin{equation}
  \label{dynZ}
  dZ^\delta_t = \delta_t dt + S Z^\delta_t dW_t,
  \qquad\qquad
  S=
  \begin{pmatrix}
  \sigma & 0 \\
  0 & 0 \\
  \end{pmatrix}.
  \end{equation}
  The payoff is rewritten as $\tilde g_t(Z^\delta_t, \eee[Z^\delta_t], \delta_t)$, where
  \begin{equation*}
  \tilde g_t(z, \bar z, d) = (z- \bar z)'Q(z-\bar z) + \bar z' (Q +\tilde Q) \bar z + T_t' z + d'N d + d'U,
  \end{equation*}
  where $'$ denotes transposition and the coefficients are defined by (notice that $Q,\tilde Q, N, U$ are constant, $T$ is stochastic and $\mathbb{F}^0$-adapted)
  \begin{gather*}
  Q=
  \begin{pmatrix}
  \lambda+\eta-\frac{\pi_0}{D} & \lambda-\frac{\pi_1}{2D} \\
  \lambda-\frac{\pi_1}{2D} & \lambda \\
  \end{pmatrix},
  \qquad
  \tilde Q=
  \begin{pmatrix}
  -\eta & 0 \\
  0 & 0 \\
  \end{pmatrix},
  \\
  T_t=
  \begin{pmatrix}
  -2\lambda D - \th P_t + \pi_0 \\
  -2\lambda D + \pi_1\\
  \end{pmatrix},
  \qquad
  N=
  \begin{pmatrix}
  \frac{\gamma}{b^2} & 0 \\
  0 & h \\
  \end{pmatrix},
  \qquad
  U=
  \begin{pmatrix}
  \frac{c}{b} \\
  0 \\
  \end{pmatrix}.
  \end{gather*}
  Correspondingly, we consider candidates in the form of:
  \begin{equation}
  \label{valfunc}
  v_t(z,\bar z) = (z-\bar z)' K_t (z-\bar z) + \bar z' \Lambda_t \bar z + Y_t' z + R_t,
  \end{equation}
   where the dynamics of the coefficients $K,\Lambda, Y,R$ are given by
   \beqs
   dK_t \; = \; \dot K_t dt, \qquad
   d\Lambda_t \; = \; \dot \Lambda_t dt, \qquad
   dY_t \; = \; \dot Y_t dt + Z_t^Y dW^0_t, \qquad
   dR_t \; = \; \dot R_t dt + Z_t^R dW^0_t,
   \enqs
for some symmetric deterministic processes $\dot K, \dot \Lambda$ and $\F^0$-adapted processes $\dot Y$, $\dot R$, $Z^Y$, $Z^R$. As in Appendix \ref{AppA1}, we have assumed the quadratic coefficient to be deterministic. Moreover, since the randomness only comes from the $\F^0$-adapted process $T$, the stochastic part in $Y,R$ only depends on $W^0$.
   
By applying It\^o's formula to $S_t^\delta$ $=$ $e^{-\rho t} v_t(Z_t^\delta, \eee[Z_t^\delta])$ $+$ $\int_0^t e^{-\rho s} \tilde g_s(Z^\delta_s, \eee[Z^\delta_s], \delta_s)ds$ and completing the square, we get $d\E[S_t^\delta] = e^{-\rho t} \E[ \Dc_t^\delta ] dt$, with
\beq \label{SOCPLito}
   \E[\Dc_t^\delta]  &=& \E \Big[ \big( \delta_t  - A(Z_t^\delta,\E[Z_t^\delta],K_t,\Lambda_t,Y_t) \big)' N \big( \delta_t  - A(Z_t^\delta,\E[Z_t^\delta],K_t,\Lambda_t,Y_t) \big)  \\
   & & \;\;\;\;\; + \;  \big(Z_t^\delta  - \E[Z_t^\delta] \big)' F(K_t, \dot K_t) \big(Z_t^\delta - \E[Z_t^\delta] \big) \\ & & \;\;\;\;\; + \; \E[Z_t^\delta]' G(K_t, \Lambda_t, \dot \Lambda_t) \E[Z_t^\delta] \nonumber\\
   & & \;\;\;\;\; + \; H_t(K_t, \Lambda_t, Y_t,\eee[Y_t],\dot Y_t)' Z_t^\delta + \; M(Y_t, R_t, \dot R_t)  \Big], \nonumber
 \enq
where the coefficients are defined by
\beqs
F(k, \dot k) &=& \dot k - kN^{-1}k + SkS - \rho k + Q,   \\
G(k,\lambda, \dot \lambda) &=& \dot \lambda -\lambda N^{-1} \lambda - \rho \lambda + SkS + Q + \tilde Q,  \\
H_t(k, \lambda, y, \bar y, \dot y) &=& \dot y - \rho y - k N^{-1}(y - \bar y) - \lambda N^{-1} (U + \bar y) + T_t,  \\
M(y, r, \dot r) &=& \dot r - \rho r  - \frac{1}{4} (U + y)'N^{-1}(U+y),  \\
A(z, \bar z, k, \lambda, y) &=& - N^{-1}k(z - \bar z) - N^{-1} \lambda \bar z - N^{-1}(U+y)/2.
\enqs
Setting $\E[\mathcal{D}^\delta_t] \geq 0$ for each $\delta$ and $\E[\mathcal{D}^\delta_t] = 0$ for $\delta = \delta^*$ provides the optimal control
\beq  \label{SOCPLimplicit}
  \delta^*_t &=& A(Z_t^{\delta^*},\E[Z_t^{\delta^*}],K_t,\Lambda_t,Y_t)
\enq
and the following equations for the coefficients
\beqs
dK_t &=& \Big( K_tN^{-1}K_t - SK_tS  +\rho K_t - Q\Big) dt, \\
d\Lambda_t &=& \Big(\Lambda_tN^{-1}\Lambda_t + \rho \Lambda_t - SK_tS - Q - \tilde Q \Big) dt, \\
dY_t &=& \Big(\rho Y_t  + K_t N^{-1} (Y_t - \eee[Y_t]) + \Lambda_t N^{-1}(U + \eee[Y_t]) - T_t \Big) dt  + Z^Y_t dW^0_t,\\
dR_t &=& \Big(\rho R_t + \frac{1}{4}(U + Y_t)'N^{-1}(U + Y_t) \Big) dt   + Z^R_t dW^0_t.
\enqs
The first and the second equation admit constant solutions $K_t \equiv K$ and $\Lambda_t \equiv \Lambda$: there exists a unique couple of symmetric positive-definite matrices $(K,\Lambda)$ which solves the equations
\begin{gather}
\label{eqK}
KN^{-1}K - SKS  +\rho K - Q = 0,
\\
\label{eqL}
\Lambda N^{-1}\Lambda + \rho \Lambda - SKS - Q - \tilde Q =0,
\end{gather}
i.e.~the systems \eqref{SOCPLdefKL} (we postpone the proof to the end of this appendix). The third and the fourth equations are linear BSDEs, with immediate solution 
(for the coefficient $Y_t$, first compute $Y_t - \eee[Y_t]$):
\beq  
  Y_t &=& \int_t^\infty e^{-(\rho \,\text{Id} + K N^{-1})(s-t)}  \eee[ T_s| \mathcal{F}^0_t] ds   \label{SOCPLdefYR} \\
 & & + \int_t^\infty \Big( e^{-(\rho \,\text{Id} + \Lambda N^{-1})(s-t)} - e^{-(\rho \,\text{Id} + K N^{-1})(s-t)} \Big) \eee[T_s] ds + (\rho \,\text{Id} + \Lambda N^{-1})^{-1}(\rho U) - U.  \nonumber \\
 R_t &=& - \frac{1}{4} \int_t^\infty e^{-\rho (s-t)} \eee[(U + Y_s)'N^{-1}(U + Y_s)| \mathcal{F}^0_t] ds. \nonumber
 \enq
Since $K,\Lambda>0$, the matrices $\rho \,\text{Id} + K N^{-1}$ and $\rho \,\text{Id} + \Lambda N^{-1}$ are positive-definite and invertible; then, by arguing as in Appendix \ref{AppA1}, the processes $Y_t$ and $R_t$ are well-defined. We get \eqref{SOCPLoptimal} by plugging \eqref{SOCPLdefYR} into \eqref{SOCPLimplicit} and setting $\tilde Y = Y+U$, whereas \eqref{SOCPLproduction} immediately follows by \eqref{dynZ} and \eqref{SOCPLoptimal}.  Moreover, the conditions (i) and (iii) in Lemma \ref{lemverif} are verified by standard estimates as in Appendix \ref{AppA1}. Further, we get the limit result by the same computations as the ones in Proposition \ref{prop:limitscons}. \ep  

\begin{Lemma}
There exists a unique couple of symmetric positive-definite matrices $(K,\Lambda)$ which solves the equations \eqref{eqK}-\eqref{eqL}.
\end{Lemma}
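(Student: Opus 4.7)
The plan is to solve the two matrix equations sequentially: first establish existence and uniqueness of $K$ solving the stochastic algebraic Riccati equation \eqref{eqK}, then show that, with $K$ as data, the purely algebraic equation \eqref{eqL} for $\Lambda$ admits a unique positive-definite solution.

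For $K$, I would interpret \eqref{eqK} as the HJB equation of the auxiliary infinite-horizon stochastic LQ problem of minimising
$J(u;z) := \eee\!\int_0^\infty e^{-\rho t}(Z_t' Q Z_t + u_t' N u_t)\,dt$
over square-integrable $u$, with $dZ_t = u_t\,dt + S Z_t\,dW_t$, $Z_0 = z$. Here $N$ is diagonal with strictly positive entries and $Q$ is symmetric positive definite ($\det Q = \eta\lambda > 0$); condition \eqref{rhosigma} combined with $S = \mathrm{diag}(\sigma, 0)$ makes $\eee\!\int_0^\infty e^{-\rho t}|Z_t^0|^2\,dt < \infty$ for the uncontrolled state, so that $J(0;z) < \infty$. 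The value function $V(z) = \inf_u J(u;z)$ is thus finite, nonnegative and quadratic, so $V(z) = z' K z$ for some symmetric $K \geq 0$; the martingale optimality principle, applied to the candidate $v(z) = z'Kz$ with square completion exactly as in \eqref{SOCPLito} (taking $\Lambda = \tilde Q = 0$ in that computation), produces precisely the algebraic relation \eqref{eqK}. Positive-definiteness $K > 0$ follows from $Q > 0$ (any trajectory starting at $z \neq 0$ incurs strictly positive cost). For uniqueness, if $K_1, K_2 > 0$ both solve \eqref{eqK}, the feedback $\hat u_t^i := -N^{-1} K_i Z_t$ is admissible under \eqref{rhosigma} and, via It\^o's formula on $e^{-\rho t}Z_t' K_i Z_t$, $J(\hat u^i;z) = z'K_i z$; by the verification part of the LQ theory both equal $V(z)$, hence $K_1 = K_2$.

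For $\Lambda$, with $K$ now fixed, \eqref{eqL} reads $\Lambda N^{-1}\Lambda + \rho \Lambda = C$ with $C := S K S + Q + \tilde Q = \begin{pmatrix} \lambda + \sigma^2 K^{11} & \lambda \\ \lambda & \lambda \end{pmatrix}$. This $C$ is symmetric positive definite since $\det C = \sigma^2 \lambda K^{11} > 0$. As $N$ is diagonal positive, $N^{1/2}$ is well-defined, and the substitution $\Lambda = N^{1/2} M N^{1/2}$ transforms the equation into
$M^2 + \rho M = \tilde C$, with $\tilde C := N^{-1/2} C N^{-1/2} > 0$.
Spectral decomposition of $\tilde C$ then yields the candidate $M = \tfrac{1}{2}\bigl(-\rho I + (\rho^2 I + 4 \tilde C)^{1/2}\bigr)$, where $(\cdot)^{1/2}$ denotes the positive-semi-definite square root; a direct expansion confirms $M^2 + \rho M = \tilde C$, and uniqueness of a positive-definite solution follows by viewing $M_1 - M_2 = D$ as the unique solution of the Sylvester equation $M_1 D + D(M_2 + \rho I) = 0$, which forces $D = 0$ because $\mathrm{spec}(M_1) \subset [0,\infty)$ and $\mathrm{spec}(-M_2 - \rho I) \subset (-\infty, -\rho]$ are disjoint. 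Hence $\Lambda = N^{1/2} M N^{1/2}$ is the unique symmetric positive-definite solution of \eqref{eqL}.

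The main obstacle lies in the equation for $K$: the multiplicative-noise term $SKS$ prevents the purely spectral reduction that works for $\Lambda$, so uniqueness must go through the LQR/verification argument above. An alternative, exploiting the $2\times 2$ structure of \eqref{SOCPLdefKL}, would be to eliminate $K^{12}$ via the middle equation (giving $K^{12} = \lambda / (\rho + \tfrac{b^2}{\gamma} K^{11} + \tfrac{1}{\delta} K^{22})$) and then verify by a direct monotonicity argument that the resulting scalar system in $(K^{11}, K^{22})$ has a unique positive solution.
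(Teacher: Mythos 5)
Your strategy---realise $K$ as the quadratic form of the value function of an auxiliary infinite-horizon LQ problem, then solve the $\Lambda$-equation algebraically---matches the paper's route for $K$ and is actually more explicit than the paper's for $\Lambda$ (the paper disposes of $\Lambda$ with ``by similar arguments''). Your $\Lambda$ argument is correct and self-contained: $C=SKS+Q+\tilde Q$ is positive definite because $\det C=\sigma^2\lambda K^{11}>0$, the substitution $\Lambda=N^{1/2}MN^{1/2}$ reduces \eqref{eqL} to $M^2+\rho M=\tilde C$, the spectral formula solves it, and the Sylvester-equation argument gives uniqueness among positive-definite solutions. Your uniqueness argument for $K$ (any positive-definite solution of \eqref{eqK} induces an admissible feedback whose cost equals $z'K_iz$, so both candidates coincide with the value function) is also sound, modulo the unstated but checkable transversality condition $e^{-\rho T}\E[Z_T'K_iZ_T]\to 0$ along the closed loop, which does follow from the ARE by an It\^o computation.

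The genuine gap is in the \emph{existence} of $K$. You assert that the value function is quadratic, $V(z)=z'Kz$, and that ``the martingale optimality principle, applied to the candidate $v(z)=z'Kz$ \ldots\ produces precisely the algebraic relation \eqref{eqK}.'' But the martingale optimality principle runs in the wrong direction here: it is a verification device that takes a matrix \emph{already known} to solve \eqref{eqK} and concludes that $z'Kz$ is the value function. It does not show that the matrix representing the value function satisfies \eqref{eqK}; for that you need either a dynamic programming principle together with enough regularity and the existence of (near-)optimal controls to force the drift of $e^{-\rho t}V(Z_t)+\int_0^t e^{-\rho s}(\cdots)\,ds$ to vanish identically, or---as the paper does---a finite-horizon approximation: the Riccati ODE \eqref{ODEapprox} with terminal condition $K_{T;T}=0$ has a unique solution, $V_T(x)=x'K_{0;T}x$ increases to $V_\infty(x)$ as $T\to\infty$, and passing to the limit in the ODE shows that $K:=\lim_T K_{0;T}$ solves the algebraic equation \eqref{eqK}. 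As written, your existence step is circular. Your suggested fallback (eliminating $K^{12}$ from \eqref{SOCPLdefKL} and running a monotonicity argument on the resulting scalar system in $(K^{11},K^{22})$) is a plausible alternative way to close the gap, but it is only sketched, not carried out.
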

  
\begin{proof}
Uniqueness immediately follows from \eqref{valfunc}. To prove the existence of the solutions, we  link \eqref{eqK}-\eqref{eqL} to suitable control problems. For $T \in \rr^+ \cup \{\infty\}$ and $x \in \rr^2$, we consider
\begin{gather*}
V_T(x)= \inf_{u \in \mathcal{U}_T} \eee \left[ \int_0^T e^{-\rho s} (X'_s Q X_s + u'_s N^{-1} u_s) ds\right],
\\
\mathcal{U}_T = \Big\{ \text{$\rr^2$-valued adapted $u = \{u_s\}_{s \in [0,T]}$ s.t. } \eee \Big[ \int_0^T e^{-\rho s} |u_s|^2 ds \Big] < \infty \Big\},
\\
dX_s = u_s ds + S X_s dW_s, \qquad X_0=x.
\end{gather*}
By arguing as in \eqref{integX}, it is easy to see that $u \in \mathcal{U}_T$ implies $\eee \big[ \int_0^T e^{-\rho s} |X_s|^2 ds \big]$ $<$ $\infty$, so that the problems are well-defined. If $T$ is finite, we know (see \cite[Sections 6.6 and 6.7]{YongZhou}, with a straightforward adaptation of the arguments to include the discount factor) that there exists a unique solution $\{K_{t;T}\}_{t \in [0,T]}$ to
\begin{equation}
\label{ODEapprox}
\begin{cases}
\frac{d}{dt} K_{t;T} = \rho K_{t;T} - SK_{t;T}S - Q + K_{t;T} N^{-1} K_{t;T},
\\
K_{T;T}=0,
\end{cases}
\end{equation}
and that for every $x \in \rr^2$ we have
\begin{equation*}
V_T(x) = x'K_{0;T}x.
\end{equation*}
It is easy to see that $V_T \to V_\infty$ as $T \to \infty$; as a consequence, there exists 
\begin{equation*}
\lim_{T \to \infty} K_{0;T} =: \tilde K.
\end{equation*}
By a classical argument and since the functions $\{K_{t;T}\}_{t \in [0,T]}$ solve \eqref{ODEapprox}, $\tilde K$ is a solution to \eqref{eqK}. Also, $\tilde K$ is symmetric as it is the limit of symmetric matrices. Moreover, notice that $N^{-1}>0$ and assume that $Q>0$; by standard arguments in control theory, it follows that  $x'\tilde K x = V_\infty(x)>0$ for each $x \neq 0$, so that $\tilde K>0$. Hence, we have proved that \eqref{eqK} admits a symmetric positive-definite solution provided that $Q>0$, which is true by \eqref{CondParamSocPl}.

By similar arguments, \eqref{eqL} admits a symmetric positive-definite solution provided that $SKS + Q + \tilde Q>0$, which is true by \eqref{CondParamSocPl}, since $K^{11}>0$.
\end{proof}

\vspace{9mm}
\small
\bibliographystyle{plain}

\end{document}